\providecommand{\noopsort[1]{}}
\numberwithin{equation}{section}
\setlist{leftmargin=*}
\setlist[1]{labelindent=1.2\parindent}
\newtheorem{thm}{Theorem}[section]
\newtheorem{cor}[thm]{Corollary}
\newtheorem{prop}[thm]{Proposition}
\newtheorem{lem}[thm]{Lemma}
\theoremstyle{remark}
\newtheorem{rem}[thm]{Remark}
\newtheorem{hyp}[thm]{Hypotheses}
\newtheorem{example}[thm]{Example}
\theoremstyle{definition}
\newcommand{\coloneqq}{\mathrel{\mathop:}=}
\renewcommand{\Re}{{\rm Re}\,}
\newcommand{\eps}{\varepsilon}
\newcommand{\CR}{\mathds{R}}
\newcommand{\CC}{\mathds{C}}
\newcommand{\K}{\mathds{K}}
\newcommand{\CN}{\mathds{N}}
\newcommand{\cL}{\mathscr{L}}
\newcommand{\tnorm}[1]{{|\kern-0.3ex|\kern-0.3ex|#1|\kern-0.3ex|\kern-0.3ex|}}
\newcommand{\la}{\langle}
\newcommand{\ra}{\rangle}
\DeclareMathOperator{\tr}{\mathrm{tr}}
\let\div\undefined
\DeclareMathOperator{\div}{\mathrm{div}}
\begin{document}
\title{$\bm{L^p}$-theory for Schr\"odinger systems}
\author{M. Kunze}
\address{Universit\"at Konstanz, Fachbereich Mathematik und Statistik, 78457 Konstanz, Germany}
\email{markus.kunze@uni-konstanz.de}
\author{L. Lorenzi}
\address{Dipartimento di Scienze Matematiche, Fisiche e Informatiche, Edificio di Matematica e Informatica, Universit\`a di Parma, Parco Area delle Scienze 53/A, I-43124 Parma, Italy.}
\email{luca.lorenzi@unipr.it}
\author{A. Maichine}
\address{Dipartimento di Matematica, Universit\`a degli Studi di Salerno, Via Giovanni Paolo II 132, I-84084 Fisciano (SA), Italy}
\email{amaichine@unisa.it}
\author{A. Rhandi}
\address{Dipartimento di Ingegneria dell'Informazione, Ingegneria Elettrica e Matematica Applicata, Università degli Studi di Salerno, Via Ponte Don Melillo 1, 84084 Fisciano (Sa), Italy}
\email{arhandi@unisa.it}
\date{}
\begin{abstract}
In this article we study for $p\in (1,\infty)$ the $L^p$-realization of the vector-valued Schr\"odinger
operator $\mathscr{L}u \coloneqq \div (Q\nabla u) + V u$. Using a noncommutative version of the Dore--Venni theorem due to
Monniaux and Pr\"uss, we prove that the $L^p$-realization of $\mathscr{L}$, defined on the intersection of the natural domains of the differential
and multiplication operators which form ${\mathscr L}$, generates
a strongly continuous contraction semigroup on $L^p(\CR^d; \CC^m)$. We also study additional properties of
the semigroup such as extension to $L^1$, positivity, ultracontractivity and prove that the generator has compact resolvent.
\end{abstract}
\maketitle

\section{Introduction}
{Second-order} elliptic differential operators with unbounded coefficients appear naturally as infinitesimal generators of  diffusion processes; the associated parabolic equation is then the Kolmogorov equation for that process. While the \emph{scalar} theory
of such equations is by now well developed (see \cite{lb07} and the references therein), the literature on \emph{systems} of parabolic equations with unbounded coefficients is still sparse.
{Beside their own interests, such systems appear naturally in the study of backward-forward stochastic differential systems, in the study of Nash equilibria to stochastic differential games, {in the analysis of the weighted $\overline{\partial}$-problem in $\CC^d$, in the time-dependent Born--Openheimer theory
and also in the study of Navier--Stokes equations. We refer the reader to \cite[Section 6]{aalt}, \cite{Ha-He}, \cite{Dall}, \cite{BGT}, \cite{hieber}, \cite{HRS} and \cite{Ha-Rh}}  for further details.}

One of the first articles concerned with systems of parabolic equations with unbounded coefficients is \cite{hetal09} where the diffusion coefficients were assumed to be strictly elliptic and bounded and coupling
between the equations was through a potential term $V$ and, additionally, through an unbounded drift term $F$. It should be noted
that for $V=0$ and a drift term growing as $|F(x)| \asymp |x|^{1+\eps}$ one can not expect generation of a semigroup on $L^p$ with respect to Lebesgue measure, even in the scalar case, see \cite{prs06}. Consequently, the drift term in \cite{hetal09} may not grow like $|x|^{1+\eps}$, whereas a growth like $|x|\log(1+|x|)$ is possible. Due to the interaction between drift and potential term, there are additional assumptions on the potential which in absence of a drift term are somewhat restrictive. Indeed, for symmetric potentials, the assumptions made in \cite{hetal09} imply the boundedness of the potential term; as for antisymmetric potential terms, the entries may
grow logarithmically.

Subsequently, there were some other publications \cite{aalt, alp16, dl11} with less restrictive assumptions on the coefficients; in particular, also unbounded diffusion coefficients can be considered. The strategy in these references is quite different from
that in \cite{hetal09}. Namely, in \cite{aalt, alp16, dl11} solutions to the parabolic equation are at first constructed in the space of
bounded and continuous functions. Afterwards the semigroup is extrapolated to the $L^p$-scale. Consequently, even though this approach allows for more general coefficients, we obtain no information about the domain of the generator of the semigroup -- a crucial information {for applications}.

In this article, we follow the strategy from \cite{hetal09} in using a noncommutative Dore--Venni theorem due to
Monniaux and Pr\"uss \cite{mp97}, thereby obtaining the domain of the generator explicitly. As we have no drift term, we can allow
much more general potential terms; in particular, we can have potential terms whose entries grow like
$|x|^r$ for some $r\in [1,2)$.

This article is organized as follows. In Section \ref{s.hrp} we fix our assumptions, present some examples satisfying this assumptions (and some that do not) and recall some preliminary results that will be used subsequently. Section \ref{s.gen} contains the actual generation theorem and in the concluding Section \ref{s.further} we study further properties of the semigroup.

\subsection*{Notation}
For any natural number $k$ we denote the Euclidean norm on $\CR^k$ or $\CC^k$ by $|\cdot|$ and the Euclidean inner-product
by $\la \cdot, \cdot \ra$. Now, let $d, m \in \CN$, $\Omega\subset \CR^d$ be an open set and $\K = \CR$ or
$\K=\CC$. By $C_c^\infty(\Omega; \K^m)$ we denote the space of test functions, i.e.\
the space of all infinitely differentiable functions with compact support.
For $p \in [1,\infty]$, $L^p(\Omega; \K^m)$ refers to the classical, vector-valued Lebesgue space of $p$-integrable functions (essentially bounded functions if $p=\infty$) on $\Omega$. The norm on $L^p(\CR^d; \CR^m)$ is denoted
by $\|\cdot\|_p$; for $p\in [1,\infty)$, the conjugate index is denoted by $p'$, i.e.\
$p^{-1} + p'^{-1}=1$ and $\langle \cdot, \cdot \rangle_{p,p'}$ denotes the canonical dual pairing between
$L^p(\CR^d; \K^m)$ and $L^{p'}(\CR^d; \K^m)$. Given $k \in \CN$,
$W^{k,p}(\Omega; \K^m)$ is the classical Sobolev space of order $k$, i.e. the space of all functions
$f \in L^p(\Omega; \K^m)$ such that for every multiindex $\alpha =(\alpha_1, \ldots, \alpha_d) \in \CN_0^d$
with $\alpha_1+\cdots + \alpha_d \leq k$ the distributional derivative {$\partial^\alpha f:=\frac{\partial^{\alpha} f}{\partial x^{\alpha}}$} belongs to $L^p(\Omega;\K^m)$.
The norm in $W^{k,p}(\CR^d;\K^m)$ is denoted by $\|\cdot\|_{k,p}$.
The space $W^{k,p}_\mathrm{loc}(\Omega; \K^m)$ consists of those measurable and locally integrable functions which, along
with their distributional derivatives up to order $k$, belong locally to $L^p$. In the case where $m=1$, we drop $\K^m$
from our notation, i.e.\ we write $L^p(\CR^d)$, $W^{k,p}(\CR^d)$, etc.

\section{Hypotheses, remarks and preliminaries}\label{s.hrp}

Throughout, we make the following assumptions.
\begin{hyp}\label{hyp1}
Let $d,m \in \CN$.
\begin{enumerate}
[(a)]
\item Let $Q = (q_{i,j}) : \CR^d\to \CR^{d\times d}$ be a symmetric matrix-valued function with Lipschitz continuous
entries such that
\begin{eqnarray*}
\eta_1|\xi|^2 \leq \langle Q(x)\xi,\xi\rangle \leq \eta_2|\xi|^2
\end{eqnarray*}
for all $x,\xi\in \CR^d$ and some constants $\eta_1, \eta_2>0$. For $p\in (1,\infty)$ we define the operator $A_p$ on $L^p(\CR^d; \CC^m)$ by
$D(A_p) = W^{2,p}(\CR^d; \CC^m)$ and
\begin{eqnarray*}
A_p u = \big[\div (Q \nabla u_k) - u_k\big]_{k=1,\ldots, m}.
\end{eqnarray*}
\item Let $V: \CR^d \to \CR^{m\times m}$ be a matrix-valued function with entries in $W^{1,\infty}_{\mathrm{loc}}(\CR^d)$
such that
\begin{equation}\label{eq.diss}
\langle V(x)\xi, \xi\rangle \leq -|\xi|^2
\end{equation}
for all $x\in\CR^d$ and $\xi \in \CR^m$.
Moreover, assume that there exists a constant $\alpha\in [0,\frac{1}{2})$ such that for every $j=1, \ldots, d$
the matrix-valued function $x\mapsto D_jV(x)(-V(x))^{-\alpha}$ is uniformly bounded in $\CR^d$.
For $p\in (1,\infty)$ we define the operator $V_p$ on $L^p(\CR^d; \CC^m)$
by setting $D(V_p) = \{f \in L^p(\CR^d; \CC^m) : Vf \in L^p(\CR^d;\CC^m)\}$ and $V_pf \coloneqq Vf$. Here $Vf$ is to be understood
as matrix-vector product.
\end{enumerate}
\end{hyp}

\begin{rem}\label{rem1}
Our assumptions imply that both $A_p$ and $V_p$ are injective operators. This was done for ease of notation. More generally,
we could also allow potentials $\tilde V$ which satisfy
\begin{equation}
\langle \tilde V(x)\xi, \xi \rangle \leq \beta |\xi|^2,
\label{torrione}
\end{equation}
for some $\beta \geq 0$ and all $\xi \in \CR^m$. Indeed, shifting the potential $\tilde V(x)$ by $(\beta +1)I$, we obtain
a potential $V(x) = \tilde V(x) -(\beta+1)I$ which then satisfies Estimate \eqref{eq.diss}.
In a similar way we can also compensate the entries of $u$ which were subtracted on the diagonal in
the definition of $A_p$. Note that shifting the potential corresponds to a rescaling of the semigroup.

In this more general situation, we cannot expect $\tilde V(x)$ to be invertible, whence the assumption
that $D_j\tilde V(x) (-\tilde V(x))^{-\alpha}$ to be uniformly bounded does not make sense. It has to be assumed
for the \emph{shifted} potential $\tilde V(x) - (\beta + 1)I$. Note, however, that shifting the potential does not change its derivative. In some concrete situations where the potential $\tilde V(x)$ is invertible, see Example \ref{ex.poly} below, the boundedness condition for the shifted potential is equivalent to that of the unshifted one.
\end{rem}

We now present an example which shows that without a semiboundedness assumption on $V$ as in \eqref{torrione} we cannot
expect generation of a semigroup in general.

\begin{example}
We consider the situation where $d=1$ and $m=2$.
 Let $\mathscr{L}$ be the vector-valued operator defined on smooth functions $\zeta : \CR \to \CR^2$ by
$\mathscr{L}\zeta=\zeta''+V\zeta$, where
\begin{eqnarray*}
V(x)=
\begin{pmatrix}
0 & x\\
0 & 0
\end{pmatrix},\qquad\;\,x\in\CR.
\end{eqnarray*}

Obviously,  the quadratic form $\xi\mapsto\la V(x)\xi, \xi\ra$ takes for $x\neq 0$ arbitrary values
in $\CR$ so that $V$ does not satisfy the semiboundedness assumption \eqref{torrione}. Fix $p\in (1,\infty)$.
We are going to prove that no realization in $L^p(\CR; \CR^2)$ of the operator $\mathscr{L}$ generates a semigroup. To that end, it suffices to prove that, for every $\lambda>0$ and properly chosen $f \in L^p(\CR;\CR^2)$, the resolvent equation
$\lambda u-\mathscr{L}u=f$ does not admit solutions in the maximal
domain $D_{p,\max}(\mathscr{L})=\{u\in L^p(\CR;\CR^2): \mathscr{L} u\in L^p(\CR;\CR^2)\}$.
The resolvent equation can be rewritten as a system as follows:
\begin{eqnarray*}
\left\{
\begin{array}{ll}
\lambda u_1(x)-u_1''(x)-xu_2(x)=f_1(x), &x\in\CR,\\[1mm]
\lambda u_2(x)-u_2''(x)=f_2(x), & x\in\CR.
\end{array}
\right.
\end{eqnarray*}

For simplicity, we will only consider functions $f_1, f_2$ which are supported in $[1, \infty)$. Solving the second equation
in $L^p(\CR)$, we find that the unique solution $u_2$ is given by
\begin{eqnarray*}
u_2(x) = \frac{1}{2\sqrt{\lambda}} \int_x^\infty e^{\sqrt{\lambda}(x-t)} f_2(t)\, dt + \frac{1}{2\sqrt{\lambda}} \int_1^x
e^{-\sqrt{\lambda}(x-t)}f_2(t)\, dt + c e^{-\sqrt{\lambda}x}
\end{eqnarray*}
for $x\ge 1$ and $u_2(x)=0$ for $x< 1$. The constant $c$ is chosen such that $u_2(1) = 0$ so that $u_2$ is a continuous function.

From now on, we pick $f_2(t) = t^{-1}$ for $t \geq 1$ and $f_2(t) = 0$ for $t< 1$. It is then easy to see that $xu_2(x)$ converges to $\lambda^{-1}$ as $x \to \infty$. In particular, $x u_2(x) \geq (2\lambda)^{-1}$ for large enough $x$, say $x \geq x_0$. Inserting this into the first equation and choosing $f_1 \equiv 0$, we obtain the differential inequality $u_1'' \leq \lambda u_1 - (2\lambda)^{-1}$.

Integrating this inequality, we obtain first
\begin{eqnarray*}
u_1'(x) \leq c_{1,\lambda} + \lambda\int_{x_0}^x u_1(t)\, dt - \frac{x}{2\lambda},\qquad\;\,x\ge x_0,
\end{eqnarray*}
and then
\begin{equation}\label{eq.contr}
u_1(x) \leq c_{2,\lambda} + c_{1,\lambda}x + \lambda \int_{x_0}^x\int_{x_0}^t u_1(s)\, ds\, dt - \frac{x^2}{4\lambda},\qquad\;\,x\ge x_0,
\end{equation}
for certain constants $c_{1,\lambda}, c_{2,\lambda}$. Suppose now that our resolvent equation has a solution
$(u_1, u_2) \in L^p(\CR; \CR^2)$. As $u_1 \in L^p(\CR)$, we can use H\"older's inequality to estimate
\begin{eqnarray*}
\bigg|\int_{x_0}^x \int_{x_0}^t u_1(s)\, ds\, dt\bigg| \leq \|u_1\|_p\int_{x_0}^x t^{1-\frac{1}{p}}\, dt
= c_3x^{2-\frac{1}{p}} + c_4
\end{eqnarray*}
for all $x\geq x_0$. Inserting this into \eqref{eq.contr} and letting $x\to \infty$ we obtain that $u_1(x)$ diverges to $-\infty$
for $x \to \infty$, which contradicts the condition $u_1 \in L^p(\CR)$.
\end{example}

We are next going to illustrate that Hypotheses \ref{hyp1} allow for potentials $V$ whose entries grow
more than linearly at infinity.

\begin{example}\label{ex.poly}
We again consider the situation where $d=1$ and $m=2$. Choosing  $r \in [1,2)$, we set
\begin{eqnarray*}
V(x) \coloneqq \begin{pmatrix}
0 & {1+|x|^r}\\
-{(1+|x|^r)} & 0
\end{pmatrix}
= {(1+|x|^r)} \begin{pmatrix}
0 & 1\\
-1 & 0
\end{pmatrix},\qquad\;\,x\in\CR^d.
\end{eqnarray*}
As $V(x)$ is antisymmetric, we find $\la V(x)\xi, \xi\ra =0$ for all $x \in \CR$ and $\xi \in \CR^2$. Note that, by rescaling, we can
arrange that the quadratic form is bounded from above by $-1$, cf.\ Remark \ref{rem1}. Using that antisymmetric matrices are diagonalizable, we see that
\begin{eqnarray*}
(-V(x))^{-\alpha} = (1+|x|^r)^{-\alpha} \begin{pmatrix}
0 & -1\\
1 & 0
\end{pmatrix}^{-\alpha},\qquad\;\,x\in\CR,
\end{eqnarray*}
so that
\begin{eqnarray*}
D_xV(x) \cdot (-V(x))^{-\alpha} = r|x|^{r-2}(1+|x|^r)^{-\alpha}x\begin{pmatrix}
0 &1\\
-1 & 0
\end{pmatrix}\cdot \begin{pmatrix}
0 & -1\\
1 & 0
\end{pmatrix}^{-\alpha}
\end{eqnarray*}
for all $x\in\CR$.
Now, if we pick $\alpha \in (\frac{r-1}{r}, \frac{1}{2})$, then we have $r-1-\alpha r <0$, so that the function $x\mapsto D_x V(x)\cdot (-V(x))^{-\alpha}$ is indeed bounded. Note that in this situation, all matrices $V(x)$ are simultaneously diagonalizable
so that we are basically in a scalar situation. Thus, the established boundedness is stable under shifting.
\end{example}

We next establish some properties of the operators $A_p$ and $V_p$. Let us recall that an operator $A$ on a Banach space $X$
is called \emph{sectorial} if it is closed, densely defined and there exists an angle $\varphi \in (0,\pi]$ such that the sector
$\Sigma_\varphi \coloneqq \{ z\in \CC : z \neq 0, |\arg z| < \varphi\}$ is contained
in the resolvent set of $-A$ and $M_\varphi \coloneqq \sup_{\lambda \in \Sigma_\varphi} \|\lambda (\lambda + A)^{-1}\|_{\cL(X)} < \infty$.
The \emph{spectral angle} $\varphi_A$ of a sectorial operator $A$ is defined as
\begin{eqnarray*}
\varphi_A \coloneqq \inf\{ \varphi \in [0, \pi) : \Sigma_{\pi-\varphi} \subset \rho (-A) \mbox{ and } M_{\pi-\varphi} < \infty\}.
\end{eqnarray*}
It is well known, see e.g.\ \cite[Theorem II.4.6]{en00}, that $-A$ generates a bounded analytic semigroup if and only if
it is sectorial with spectral angle $\varphi_A < \frac{\pi}{2}$. The operator $A$ is called \emph{quasi-sectorial} if $\nu+ A$ is sectorial
for some $\nu \geq 0$.

An injective, sectorial operator $A$ is said to admit \emph{bounded imaginary powers} if the closure of
$A^{is}$, initially defined on $D(A)\cap R(A)$, defines a bounded operator on $X$ for all $s \in \CR$ and
the family $(A^{is})_{s \in \CR}$ is a strongly continuous {group}
on $X$. The \emph{power angle} $\theta_A$ of $A$ is the growth bound of this group, i.e.,
\begin{eqnarray*}
\theta_A \coloneqq \inf\{ \omega \geq 0 : \exists\, M : \|A^{is}\| \leq M e^{\omega |s|}\;\, \forall\, s \in \CR\}.
\end{eqnarray*}
By the Pr\"uss--Sohr theorem (\cite{ps90}) we have $\theta_A \geq \varphi_A$. For more information on this topic we refer e.g., to Chapter 3
of \cite{h06}.

We now collect some properties of the operators $A_p$ and $V_p$.

\begin{prop}\label{p.properties}
Let $1<p<\infty$.
\begin{enumerate}
[(a)]
\item The operator $-A_p$ is invertible, sectorial and admits bounded imaginary powers. Its power angle is 0. Consequently, for every
$\vartheta >0$ there exists a constant $c$ such that for $s \in \CR$ and $\lambda \in \Sigma_{\pi-\vartheta}$ we have
\begin{eqnarray*}
\hskip 1.4truecm \|(\lambda - A_p)^{-1}\|_{\mathscr{L}(L^p(\CR^d;{\CC^m}))} \leq \frac{c}{1+|\lambda|},\qquad
\|(-A_p)^{is}\|_{\mathscr{L}(L^p(\CR^d;{\CC^m}))} \leq ce^{\vartheta |s|}.
\end{eqnarray*}
\item The operator $-V_p$  is {invertible} and admits bounded imaginary powers. Its power angle is at most $\frac{\pi}{2}$. Consequently,
for every
$\vartheta >\frac{\pi}{2}$ there exists a constant $c$ such that for $s \in \CR$ and $\lambda \in \Sigma_{\pi-\vartheta}$ we have
\begin{eqnarray*}
\hskip 1.4truecm\|(\lambda - V_p)^{-1}\|_{\mathscr{L}(L^p(\CR^d;{\CC^m}))} \leq \frac{c}{1+|\lambda|},\qquad
\|(-V_p)^{is}\|_{\mathscr{L}(L^p(\CR^d;{\CC^m}))} \leq ce^{\vartheta |s|}.
\end{eqnarray*}
\end{enumerate}
\end{prop}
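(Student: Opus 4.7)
The plan is to treat parts (a) and (b) separately, using completely different strategies.

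For part (a), I would observe that, under the identification $L^p(\CR^d;\CC^m)\cong L^p(\CR^d)^m$, the operator $A_p$ acts diagonally and is the $m$-fold direct sum of the scalar operator $B_p u\coloneqq\div(Q\nabla u)-u$. Sectoriality, invertibility and bounded imaginary powers all pass through finite direct sums, with spectral and power angles preserved, so the entire claim reduces to the corresponding statement for $B_p$ on $L^p(\CR^d)$. This latter is by now a classical fact: on $L^2(\CR^d)$ the operator $B_2$ is self-adjoint (thanks to the symmetry of $Q$) and strictly negative (thanks to the ellipticity and the $-I$ shift), so the spectral theorem immediately yields invertibility, sectoriality of angle $0$ and BIP of power angle $0$. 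For general $p\in(1,\infty)$ one then transfers these properties to $L^p(\CR^d)$ via Gaussian heat-kernel estimates and the standard $H^\infty$-calculus machinery available for second-order divergence-form elliptic operators with Lipschitz coefficients.

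For part (b), the first step is to upgrade the real dissipativity estimate \eqref{eq.diss} to a uniform \emph{complex} accretivity bound. Writing $\xi=a+ib\in\CC^m$ with $a,b\in\CR^m$, one computes
\begin{equation*}
\Re\langle -V(x)\xi,\xi\rangle_{\CC^m}=\langle -V(x)a,a\rangle+\langle -V(x)b,b\rangle\ge|\xi|^2
\end{equation*}
uniformly in $x\in\CR^d$, so each matrix $-V(x)$ is strictly accretive on the Hilbert space $\CC^m$ with the same constant $1$. From this, the resolvent estimate is a direct consequence of the classical numerical range bound: for $\vartheta>\pi/2$ and $\lambda\in\Sigma_{\pi-\vartheta}$ one has $\Re\lambda\ge|\lambda|\cos(\pi-\vartheta)>0$, the numerical range of $\lambda-V(x)$ is contained in $\{z\in\CC:\Re z\ge 1+\Re\lambda\}$, and hence
\begin{equation*}
\|(\lambda-V(x))^{-1}\|_{\mathscr{L}(\CC^m)}\le\frac{1}{1+\Re\lambda}\le\frac{c(\vartheta)}{1+|\lambda|}
\end{equation*}
uniformly in $x$. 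Passing to the multiplication operator gives the claimed resolvent bound; the case $\lambda=0$, together with the accretivity estimate, yields invertibility with $\|(-V_p)^{-1}\|\le 1$.

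For the imaginary powers I would invoke the classical Heinz--Kato bound $\|M^{is}\|_{\mathscr{L}(H)}\le e^{(\pi/2)|s|}$ valid for every m-accretive operator $M$ on a Hilbert space $H$, applied with $H=\CC^m$ and $M=-V(x)$. After verifying, via the Dunford integral representation and Fubini's theorem, that $(-V_p)^{is}$ is the pointwise multiplication operator by $(-V(\cdot))^{is}$, this yields $\|(-V_p)^{is}\|_{\mathscr{L}(L^p(\CR^d;\CC^m))}\le e^{(\pi/2)|s|}$, and hence BIP with power angle at most $\pi/2$. The hardest ingredient is really the scalar BIP in part (a): the $L^2$ case is immediate from self-adjointness, but the extension to arbitrary $p\in(1,\infty)$ requires the full $L^p$ heat-kernel and transference machinery. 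Part (b) is, by contrast, essentially pointwise linear algebra on $\CC^m$ made uniform in $x$, with the only slight technicality being the justification of the pointwise formula for $(-V_p)^{is}$.
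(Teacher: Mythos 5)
Your proposal takes essentially the same route as the paper. In part (a) you are pointing at the same underlying fact the paper uses---the bounded $H^\infty$-calculus of second-order divergence-form operators with Lipschitz coefficients on $L^p(\CR^d)$---but you leave the $p\ne 2$ case to ``standard machinery,'' whereas the paper disposes of it cleanly by citing Duong--Simonett; your direct-sum observation is correct, though not load-bearing since that citation already applies to the system. Part (b) is proved exactly as in the paper: pointwise accretivity of $-V(x)$, the Hilbert-space bound $\|M^{is}\|\le e^{\pi|s|/2}$ for m-accretive $M$ on $\CC^m$, and an integral representation of the imaginary powers combined with a density argument to identify $(-V_p)^{is}$ with pointwise multiplication; you invoke a Dunford integral where the paper uses the Komatsu formula, and the latter is really the appropriate tool here since $z^{is}$ has no decay at $0$ or $\infty$ and $V_p$ is unbounded, so the naive Dunford--Riesz contour integral does not converge without modification. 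Your explicit verification that the real dissipativity estimate \eqref{eq.diss} upgrades to complex accretivity of $-V(x)$ on $\CC^m$ fills in a complexification step the paper leaves tacit (the paper states accretivity on $\CR^m$ but then cites results formulated for complex Hilbert spaces) and is a genuine improvement in exposition.
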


\begin{proof}
(a) It was proved in \cite[Theorem 6.1]{ds97} that for every $\varphi \in (0,\frac{\pi}{2})$ the operator $-A_p$ has a bounded $H^\infty$-calculus on $\Sigma_{\pi-\varphi}$. As for every $s \in \CR$ the function $f(z) = z^{is}$ is bounded and holomorphic on that sector,
the boundedness of the imaginary powers follows.

(b) It follows from Hypotheses \ref{hyp1}(b) that for every $x \in \CR^d$ the matrix $-V(x)$ defines an m-accretive operator on $\CR^m$. By \cite[Example 2]{ps90} (cf.\ also \cite[Corollary 7.1.8]{h06}) we have $|(-V(x))^{is}| \leq e^{\frac{\pi}{2}|s|}$.
Fix $f\in C^{\infty}_c(\CR^d;\CC^m)$ and $s\in\CR$. In view of the Komatsu representation formula (see, e.g., \cite[Proposition 3.2.2]{h06}), it follows that $((-V_p)^{is}f)(x)=(-V(x))^{is}f(x)$ for almost every $x\in\CR^d$. By the above estimate, we can infer that $\|(-V_p)^{is}f\|_p\le e^{\frac{\pi}{2}|s|}\|f\|_p$, which implies the claim by a straightforward density argument.
\end{proof}

\section{The generation result}\label{s.gen}

In this section we are going to prove that the sum $-(A_p+V_p)$, defined on  the domain $D(A_p)\cap D(V_p)$
is closed and quasi-sectorial. To that end, we make use of a non-commutative version of the Dore--Venni Theorem, due to Monniaux and Pr\"uss \cite[Corollary 2]{mp97}. The theorem is valid in arbitrary UMD Banach spaces. We recall that
a Banach space $X$ is called \emph{UMD Banach space} if the Hilbert transform
\begin{eqnarray*}
Hf \coloneqq {\frac{1}{\pi}} \,\mathrm{p.v.}\int_{\CR} f(t-s)\frac{1}{s}\, ds
\end{eqnarray*}
extends to a bounded linear operator on $L^p(\CR;X)$ for one, equivalently, all $p\in (1,\infty)$. In particular,
$X=L^p(\CR^d;\CC^m)$ is a UMD Banach space. For more information we refer the reader to \cite{b86}.

Crucial to apply \cite[Corollary 2]{mp97} is a commutator estimate. To formulate it, we use the following notation.
Given a (sufficiently differentiable) matrix{-valued function} $M : \CR^d \to \CR^{m\times m}$, we write $\nabla^k M$ for the matrix whose $k$-column is the gradient of the $k$-th row of $M$. Thus, if $M=(m_{ij})$, then
\begin{eqnarray*}
\nabla^kM = \begin{pmatrix}
D_1m_{k1} & \ldots & D_1 m_{km}\\
\vdots & \ddots & \vdots \\
D_l m_{k1} & \ldots & D_l m_{km}
\end{pmatrix}.
\end{eqnarray*}

\begin{lem}\label{l.commutator}
Fix $p\in (1,\infty)$, let $A_p$ be defined as in Hypotheses \ref{hyp1} and $M = (m_{ij}) : \CR^d \to \CR^{m\times m}$
be a matrix valued function with entries
in $W^{2,\infty}(\CR^d)$; the induced multiplication operator on $L^p(\CR^d;\CC^m)$ is denoted by $M_p$. Then, for every $f \in W^{2,p}(\CR^d, \CC^m)$
the $k$-th entry of $(A_pM_p - M_pA_p)f$ is given by
\begin{eqnarray*}
\div (Q (\nabla^k M)f) + \tr \big[ Q (\nabla^k M) Df\big].
\end{eqnarray*}
\end{lem}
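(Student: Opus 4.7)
The statement is essentially an identity to be verified by direct computation; the nontrivial part is only to recognise the resulting terms in the compact matrix notation introduced just above.

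First I would write out what the $k$-th entry of $(A_pM_p - M_pA_p)f$ really is. By definition, the $k$-th component of $M_pf$ is $\sum_j m_{kj}f_j$, and the $k$-th component of $A_p(Mf)$ is $\div(Q\nabla(\sum_j m_{kj}f_j)) - \sum_j m_{kj}f_j$, while the $k$-th component of $MA_pf$ is $\sum_j m_{kj}[\div(Q\nabla f_j) - f_j]$. The zero-order ``$-f_k$'' shifts from the definition of $A_p$ cancel, so the $k$-th entry of the commutator is simply
\begin{equation*}
\sum_{j=1}^m \bigl[\div(Q\nabla(m_{kj}f_j)) - m_{kj}\div(Q\nabla f_j)\bigr].
\end{equation*}

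Next I would expand this using the scalar Leibniz rule. From $\nabla(m_{kj}f_j) = f_j\nabla m_{kj} + m_{kj}\nabla f_j$ we obtain $Q\nabla(m_{kj}f_j) = f_j\,Q\nabla m_{kj} + m_{kj}\,Q\nabla f_j$, and taking divergence yields
\begin{equation*}
\div(Q\nabla(m_{kj}f_j)) = \div(f_j\,Q\nabla m_{kj}) + \la\nabla m_{kj}, Q\nabla f_j\ra + m_{kj}\div(Q\nabla f_j),
\end{equation*}
so the $k$-th commutator entry reduces to $\sum_j\div(f_j\,Q\nabla m_{kj}) + \sum_j\la\nabla m_{kj},Q\nabla f_j\ra$. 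The $W^{2,\infty}$ regularity of $M$ together with $f \in W^{2,p}$ ensures that all distributional derivatives involved are honest $L^p$ functions and that Leibniz applies.

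Finally I would identify these two sums in the matrix notation of the statement. Since the $j$-th column of $\nabla^kM$ is $\nabla m_{kj}$, the product $(\nabla^kM)f$ equals the vector $\sum_j f_j\,\nabla m_{kj}$, so $\sum_j\div(f_j\,Q\nabla m_{kj}) = \div(Q(\nabla^kM)f)$. For the second sum, the symmetry of $Q$ gives
\begin{equation*}
\la\nabla m_{kj},Q\nabla f_j\ra = \sum_{i,l} q_{li}(D_i m_{kj})(D_l f_j),
\end{equation*}
and after summing over $j$ one recognises the $(l,l)$-diagonal of the product $Q(\nabla^kM)(Df)$ (with $Df$ the Jacobian $(D_lf_j)_{j,l}$), so the sum equals $\tr[Q(\nabla^kM)Df]$. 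Combining the two pieces yields the asserted formula. I do not foresee a serious obstacle: the only point requiring care is lining up the index conventions of $\nabla^kM$ and $Df$ correctly so that the trace and divergence expressions come out precisely as written.
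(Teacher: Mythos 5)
Your proof is correct and follows essentially the same route as the paper's: apply the product rule twice (once at the level of $\nabla$ and once at the level of $\div$), cancel the second-order term $m_{kl}\div(Q\nabla f_l)$, and identify the two remaining pieces with $\div(Q(\nabla^k M)f)$ and $\tr[Q(\nabla^k M)Df]$. The paper carries the computation out entirely in coordinate sums over $i,j,l$, while you package the same steps in vector-calculus notation before unfolding indices only at the trace identification; the underlying algebra is identical.
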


\begin{proof}
Making use of the definition of the operators and the product rule, we find that the $k$-th entry of $(A_pM_p-M_pA_p)f$ is
\begin{align*}
& \sum_{i,j=1}^d D_i \big( q_{ij} D_j (M f)_k\big) - \sum_{l=1}^m m_{kl} \div (Q \nabla f_l)\\
= & \sum_{i,j=1}^d D_i \Big( q_{ij} D_j \sum_{l=1}^m m_{kl} f_l \Big) - \sum_{l=1}^m m_{kl} \div (Q \nabla f_l)\\
= & \sum_{i,j=1}^d D_i \Big( q_{ij} \sum_{l=1}^m \big[ (D_j m_{kl}) f_l + m_{kl}(D_j f_l)\big]\Big) - \sum_{l=1}^m m_{kl} \div (Q \nabla f_l)\\
= & \div (Q (\nabla^k M)f) + \sum_{l=1}^m \sum_{i,j=1}^d (D_i m_{kl}) q_{ij} (D_j f_l)\\
=& \div (Q (\nabla^k M)f) + \tr \big[ Q (\nabla^k M) Df\big]
\end{align*}
for all $f\in W^{2,p}(\CR^d;\CC^m)$.
\end{proof}

\begin{thm}\label{t.main1}
Assume Hypotheses \ref{hyp1} and fix $p\in (1,\infty)$.
Then, the operator $-(A_p+V_p)$, defined on the domain
$D(A_p)\cap D(V_p)$, is closed, densely defined and quasi-sectorial.
\end{thm}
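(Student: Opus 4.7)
The plan is to apply the non-commutative Dore--Venni theorem of Monniaux and Pr\"uss \cite[Corollary 2]{mp97} to the pair of operators $-A_p$ and $-V_p$ on the UMD space $X = L^p(\CR^d;\CC^m)$. The three structural hypotheses of that theorem (sectoriality, bounded imaginary powers, and the constraint on power angles) are already packaged in Proposition \ref{p.properties}: we have $\theta_{-A_p} = 0$ and $\theta_{-V_p} \leq \pi/2$, whose sum is strictly below $\pi$. Thus the entire issue reduces to verifying the ``commutator condition'' between $-A_p$ and $-V_p$ required by \cite[Corollary 2]{mp97}. Once this is done, Monniaux--Pr\"uss will yield that the closure of $-A_p - V_p$ on $D(A_p)\cap D(V_p)$ is sectorial with spectral angle below $\pi/2+\pi/2 = \pi$; shifting by a small $\nu>0$ produces an invertible sectorial operator, giving the claimed quasi-sectoriality.

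The key step is the commutator estimate. My plan is to compute, for sufficiently regular $f$, the action of $[A_p, V_p]$ using Lemma \ref{l.commutator} applied to $M = V$ (after a mollification of $V$ to supply the missing regularity, retaining uniform bounds along the approximation). This produces an expression of the form $\div(Q(\nabla^k V)f) + \tr\bigl[Q(\nabla^k V)Df\bigr]$, and the derivatives $D_j V$ are exactly the objects controlled by Hypotheses \ref{hyp1}(b) once multiplied by $(-V)^{-\alpha}$. Writing $D_j V = \bigl(D_j V\cdot(-V)^{-\alpha}\bigr)(-V)^\alpha$, I can express the commutator in a form where a bounded matrix factor is sandwiched between a fractional power of $-V_p$ with exponent $\alpha < 1/2$ and gradient factors that are absorbed into a fractional power of $-A_p$. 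The reiteration/interpolation properties of the BIP calculus on both sides, together with the mixed derivative estimate for $-A_p$ (standard for second-order elliptic operators with Lipschitz coefficients, and coming out of the $H^\infty$-calculus used to prove Proposition \ref{p.properties}(a)), reduce the expression to a bounded operator on $L^p$. This, appropriately rephrased at the resolvent level, is precisely the commutator condition entering \cite[Corollary 2]{mp97}.

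The main obstacle I expect is the interplay between the matrix-valued nature of $V$ and the fractional power $(-V_p)^{-\alpha}$: in general $(-V(x))^{-\alpha}$ does not commute with $D_j V(x)$, so one cannot treat $\nabla V \cdot (-V)^{-\alpha}$ as a scalar product, and Lemma \ref{l.commutator} is not literally available for $M=(-V)^{-\alpha}$ because that function is not $W^{2,\infty}$. The plan is to sidestep this by working with $V$ itself, using the Komatsu-type pointwise representation already exploited in the proof of Proposition \ref{p.properties}(b) to intertwine $(-V_p)^{-\alpha}$ with pointwise matrix multiplication, and to carry out the smoothing/truncation of $V$ away from the final estimate so that only the quantity $D_j V\cdot (-V)^{-\alpha}$ (which is uniformly bounded by assumption) enters into the constants. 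Once the commutator estimate is in place, closedness and sectoriality of $-A_p-V_p + \nu$ on $D(A_p)\cap D(V_p)$ follow from Monniaux--Pr\"uss; density of the domain is immediate since $C_c^\infty(\CR^d;\CC^m)\subset D(A_p)\cap D(V_p)$ and is dense in $L^p(\CR^d;\CC^m)$.
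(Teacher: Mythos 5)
Your plan follows the paper's proof essentially step by step: Monniaux--Prüss as the engine, Proposition~\ref{p.properties} supplying the BIP/power-angle hypotheses, Lemma~\ref{l.commutator} for the commutator computation after smoothing $V$, the factorization $D_jV=\bigl(D_jV\cdot(-V)^{-\alpha}\bigr)(-V)^{\alpha}$ feeding the Hypothesis~\ref{hyp1}(b) bound, and the absorption of one gradient into $(-A_p)^{-1/2}$ on the left and $(-V_p)^{\alpha}(\mu-V_p)^{-1}$ on the right (the paper applies the lemma to $M=(\mu-V^{(n)})^{-1}$ rather than to $V^{(n)}$ and then conjugates, but these are algebraically equivalent routes to the same resolvent-level estimate, with the $\alpha<1/2$ hypothesis ensuring the exponents land in the admissible Labbas--Terreni window). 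The one attribution you elide is that the boundedness of $(-A_p)^{-1/2}\div$ on $L^p$ is not a formal consequence of the $H^\infty$-calculus of \cite{ds97} but is the Kato square root result of \cite{aetal02}; the paper invokes this explicitly, and you should too.
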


\begin{proof}
According to Proposition \ref{p.properties} we can pick $\theta_A, \theta_V \in (0,\pi)$ with
$\theta_A+\theta_V < \pi$ such that
\begin{eqnarray*}
\|(\lambda - A_p)^{-1}\|_{\mathscr{L}(L^p(\CR^d;{\CC^m}))}\leq \frac{c}{1+|\lambda|}\quad \mbox{and}\quad
\|(-A_p)^{is}\|_{\mathscr{L}(L^p(\CR^d;{\CC^m}))} \leq ce^{\theta_A |s|}
\end{eqnarray*}
for $\lambda \in \Sigma_{\pi-\theta_A}$ and
\begin{eqnarray*}
\|(\lambda - V_p)^{-1}\|_{\mathscr{L}(L^p(\CR^d;{\CC^m}))} \leq \frac{c}{1+|\lambda|}\quad \mbox{and}\quad
\|(-V_p)^{is}\|_{\mathscr{L}(L^p(\CR^d;{\CC^m}))} \leq ce^{\theta_V |s|}
\end{eqnarray*}
for $\lambda \in \Sigma_{\pi-\theta_V}$.

Fixing $f\in L^p(\CR^d, \CC^m)$, $\lambda \in \Sigma_{\pi-\theta_A}$ and $\mu \in \Sigma_{\pi-\theta_V}$,
we set
\begin{eqnarray*}
C(\lambda, \mu )f \coloneqq (-A_p)(\lambda- A_p)^{-1}\big[ (-A_p)^{-1}(\mu - V_p)^{-1} - (\mu-V_p)^{-1}(-A_p)^{-1}\big]f.
\end{eqnarray*}
We will rewrite this so that we can apply Lemma \ref{l.commutator}. To that end, we approximate the potential $V$ with smoother potentials. Let $(\rho_n)_{n\in \CN}$ be a mollifier sequence and let $\zeta \in C^\infty_c(\CR^d)$ be such that
$0\leq \zeta (x) \leq 1$ for all $x \in \CR^d$ and $\zeta (x) = 1$ for $|x|\leq 1$ whereas $\zeta (x) = 0$ for $|x|\geq 2$.
For a locally integrable function $\varphi$, we set
\begin{eqnarray*}
(K_n\varphi)(x) \coloneqq \zeta \Big(\frac{x}{n}\Big) \int_{\CR^d} \rho_n (y) \varphi (x-y)\, dy,\qquad\;\,x\in\CR^d,\;\,n\in\CN.
\end{eqnarray*}
Clearly, $K_n\varphi \in C_c^\infty(\CR^d)$. As is well-known, $K_n\varphi$ converges locally uniformly to $\varphi$ for every
continuous function $\varphi$ as $n\to\infty$; if $\varphi$ belongs to $W^{j,p}(\CR^d;\CR^m)$ for some $j\in \CN$, then we also get convergence in $W^{j,p}(\CR^d;\CR^m)$.

We now set $V^{(n)} \coloneqq (K_n v_{ij})$ for $n\in\CN$.  Note that ${\langle V^{(n)}\xi, \xi\rangle \leq 0}$ on $\CR^d$ for every $\xi\in\CR^m$. Consequently, the induced multiplication
operator $V_p^{(n)}$ on $L^p(\CR^d;\CC^m)$ is {dissipative} whence for $\mu\in \CC$ with $\Re\mu >0$
we have $\mu \in \rho (V_p^{(n)})$ and $\| (\mu - V_p^{(n)})^{-1}\|_{{\mathscr L}(L^p(\CR^d;\CC^m))} \leq (\Re\mu)^{-1}$. In particular, for fixed $\mu$
the {resolvent operators} $(\mu - V_p^{(n)})^{-1}$ are uniformly bounded. We claim that $(\mu- V_p^{(n)})^{-1}$ converges strongly
to $(\mu - V_p)^{-1}$. Indeed, for $g\in C_c(\CR^d; \CC^m)$ we have
\begin{eqnarray*}
(\mu - V_p^{(n)})^{-1}g - (\mu - V_p)^{-1}g = (\mu- V_p^{(n)})^{-1}(V^{(n)} - V)(\mu - V_p)^{-1}g.
\end{eqnarray*}
Since $(\mu- V_p)^{-1}g$ has compact support and $V^{(n)}$ converges to $V$ locally uniformly on $\CR^d$, it follows that
$(V^{(n)}- V)(\mu - V_p)^{-1}g\to 0$ uniformly and thus in $L^p(\CR^d; \CC^m)$. Using the uniform boundedness of
the resolvents, the claim follows.

Thus, setting
\begin{align*}
&C_{n,m}(\lambda, \mu)f\\
 \coloneqq &A_p(\lambda - A_p)^{-1}\big[ (-A_p)^{-1}(\mu - V^{(n)}_p)^{-1} - (\mu - V_p^{(n)})^{-1} (-A_p)^{-1}\big]A_pK_m((-A_p)^{-1}f)
\end{align*}
we see that, letting first $n$ and then $m$ tend to $\infty$, $C_{n,m}(\lambda, \mu)f$ converges to $C(\lambda, \mu)f$ in $L^p(\CR^d; \CC^m)$. Noting
that $(\mu - V_p^{(n)})^{-1}$ is a multiplication operator with $C^{\infty}$-entries which, together with its derivatives, are bounded, it
follows that we can rewrite $C_{n,m}(\lambda, \mu)f$ as
\begin{eqnarray*}
C_{n,m}(\lambda, \mu)f = (\lambda - A_p)^{-1}\big[ A_p(\mu- V_p^{(n)})^{-1} - (\mu - V_p^{(n)})^{-1}A_p\big]K_m((-A_p)^{-1}f).
\end{eqnarray*}
We can now apply Lemma \ref{l.commutator}. Noting that
\begin{eqnarray*}
D_j (\mu - V^{(n)})^{-1} = (\mu - V^{(n)})^{-1}(D_j V^{(n)}) (\mu - V^{(n)})^{-1}
\end{eqnarray*}
we find  that
\begin{align}
& C_{n,m}(\lambda, \mu)f\notag\\
 = & (\lambda - A_p)^{-1}\div \big( Q (\mu - V^{(n)})^{-1}\nabla V^{(n)} (\mu - V^{(n)})^{-1}K_m((- A_p)^{-1}f)\big)\notag\\
 &+ (\lambda - A_p)^{-1} (\mu- V^{(n)})^{-1} \tr [Q(\nabla V^{(n)})(\mu- V^{(n)})^{-1}\nabla K_m((-A_p)^{-1}f)]\notag\\
  = & (-A_p)^{\frac{1}{2}}(\lambda\!-\! A_p)^{-1}(-A_p)^{-\frac{1}{2}}\div\!\big( Q (\mu\!-\! V^{(n)})^{-1}\nabla V^{(n)} (\mu\! -\! V^{(n)})^{-1}K_m((-A_p)^{-1}f)\big)\notag\\
 &+ (\lambda - A_p)^{-1} (\mu- V^{(n)})^{-1} \tr [Q(\nabla V^{(n)})(\mu- V^{(n)})^{-1}\nabla K_m((-A_p)^{-1}f)].\notag\\
 \label{salerno}
\end{align}
\vskip -3truemm
Here,
\begin{eqnarray*}
\div \big( Q (\mu - V^{(n)})^{-1}\nabla V^{(n)} (\mu - V^{(n)})^{-1} K_m((- A_p)^{-1}f)\big)
\end{eqnarray*}
should be interpreted as the vector, whose $k$-th component is
\begin{eqnarray*}
\div \big( Q (\mu - V^{(n)})^{-1}\nabla^k V^{(n)} (\mu - V^{(n)})^{-1} K_m((- A_p)^{-1}f)\big).
\end{eqnarray*}
The interpretation of the trace term in \eqref{salerno} is similar.

To be able to pass to the limit as $n\to \infty$, we have to take care of the summand involving the divergence. To that end,
 pick $q \in (1,\infty)$
such that $p^{-1}+q^{-1} =1$. Recall
that by the results of \cite{aetal02} the operator $(-A_q)^{-\frac{1}{2}}$ is bounded from
$L^q(\CR^d; \CC^m)$ to $W^{1,q}(\CR^d; \CC^m)$. Therefore, for $j=1, \ldots, d$,
the operator $\big(D_j (-A_q)^{-\frac{1}{2}}\big)^*$
defines a bounded operator on $L^{p}(\CR^d; \CC^m)$. Consequently,
we can extend $(-A_p)^{-\frac{1}{2}}\div$ to a bounded operator $S$ on $L^p(\CR^d;\CC^m)$.
Since the function $K_m((-A_p)^{-1}f$ is compactly supported on $\CR^d$ and $D_j V^{(n)}$ converges to $D_j V$ in $L^p_{\mathrm{loc}}(\CR^d; \CC^m)$ and $D_jV^{(n)}$ is locally uniformly bounded for $j=1, \ldots, d$, by dominated convergence
we deduce that $Q(\mu-V^{(n)})^{-1}\nabla V^{(n)}(\mu-V^{(n)})^{-1}K_m((-A_p)^{-1}f)$ converges to $Q(\mu-V)^{-1}\nabla V(\mu-V)^{-1}K_m((-A_p)^{-1}f)$
in $L^p(\CR^d;\CC^m)$ as $n\to\infty$.

Similarly, we see that
$(\mu- V^{(n)})^{-1} \tr [Q\nabla V^{(n)}(\mu- V^{(n)})^{-1}\nabla K_m((-A_p)^{-1}f)]$ converges to
$(\mu- V)^{-1} \tr [Q\nabla V(\mu- V)^{-1}\nabla K_m((-A_p)^{-1}f)]$. Hence,
letting $n\to \infty$ and denoting by $C_m(\lambda,\mu)f$ the limit, we thus find that
\begin{align*}
C_m(\lambda, \mu)f= & (-A_p)^{\frac{1}{2}}(\lambda - A_p)^{-1}S \big( Q (\mu - V)^{-1}\nabla V (\mu - V)^{-1}(K_m(- A_p)^{-1}f)\big)\\
 &+ (\lambda - A_p)^{-1}  (\mu- V)^{-1} \tr [Q\nabla V(\mu- V)^{-1}\nabla K_m((-A_p)^{-1}f)]\\
 =\colon & T_1 + T_2.
\end{align*}
We are now in the position to provide the crucial commutator estimate. Let us start with the term $T_1$. By the boundedness of the
$H^\infty$-calculus of $-A_p$ the operator $(-A_p)^\frac{1}{2}(\lambda -A_p)^{-1}$ defines a bounded operator on
$L^p(\CR^d; \CC^m)$ and
\begin{eqnarray*}
\|(-A_p)^\frac{1}{2}(\lambda -A_p)^{-1}\|_{\mathscr{L}(L^p(\CR^d; \CC^m))} \leq \frac{C}{|\lambda|^\frac{1}{2}}
\end{eqnarray*}
for a suitable constant $C$. As noted above, $S$ defines a bounded linear operator, as does multiplication
with the bounded matrix-valued function $Q$. By dissipativity, $\|(\mu - V)^{-1}\|_{{\mathscr L}(L^p(\CR^d;\CC^m))}\leq C|\mu|^{-1}$. To estimate
the rest of the term $T_1$, we write
\begin{eqnarray*}
\nabla V(\mu- V)^{-1}K_m((-A_p)^{-1}f) = (\nabla V) \cdot (-V)^{-\alpha} (-V)^{\alpha}(\mu-V)^{-1}K_m((-A_p)^{-1}f),
\end{eqnarray*}
where $\alpha$ is as in Hypotheses \ref{hyp1}. Thus, the term $(\nabla V)\cdot (-V)^{-\alpha}$ is bounded. Using pointwise
the boundedness of the $H^\infty$-calculus of $-V(x)$ (see \cite[Corollary 7.1.8]{h06}), we find
\begin{eqnarray*}
\|(-V)^{\alpha} (\mu - V)^{-1}\|_{\cL (L^p(\CR^d;\CC^M))} \leq \frac{C}{|\mu|^{1-\alpha}}
\end{eqnarray*}
so that, by the boundedness of $(-A_p)^{-1}$ we find that, overall,
\begin{eqnarray*}
\|T_1\|_p \leq \frac{M_1}{|\lambda|^{\frac{1}{2}}|\mu|^{2-\alpha}} \|f\|_p.
\end{eqnarray*}
The term $T_2$ can be estimated similarly, so that, altogether, we have an estimate
\begin{eqnarray*}
\|C(\lambda, \mu) f\|_p \leq \frac{M}{|\lambda|^{\frac{1}{2}}|\mu|^{2-\alpha}} \|f\|_p.
\end{eqnarray*}
We may thus invoke \cite[Corollary 2]{mp97} which yields the claim.
\end{proof}

\begin{cor}\label{c.generate}
The operator $L_p = A_p+V_p$ generates a strongly continuous and contractive semigroup $\{S_p(t)\}_{t\geq 0}$.
\end{cor}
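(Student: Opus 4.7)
The plan is to derive the corollary from Theorem~\ref{t.main1} by invoking the Lumer--Phillips theorem. Theorem~\ref{t.main1} already gives three of the four ingredients: $L_p$ is densely defined and closed on $D(A_p)\cap D(V_p)$, and the quasi-sectoriality of $-L_p$ means that $\nu - L_p$ is sectorial for some $\nu\ge 0$; in particular $(\nu,\infty)\subset \rho(L_p)$, so the range condition $R(\lambda_0 - L_p) = L^p(\CR^d;\CC^m)$ holds for some $\lambda_0>0$. Only the dissipativity of $L_p$ on its domain remains to be verified.

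I would prove dissipativity of each summand separately. For $V_p$, I would first complexify the hypothesis: writing $\xi = a+ib$ with $a,b\in\CR^m$ and using that $V(x)$ is real, one gets $\Re\la V(x)\xi,\xi\ra_{\CC^m} = \la V(x)a,a\ra + \la V(x)b,b\ra \leq -|\xi|^2$. A Gr\"onwall argument on $\frac{d}{dt}|e^{tV(x)}\xi|^2$ then delivers the uniform bound $|e^{tV(x)}\xi|\leq e^{-t}|\xi|$, so the pointwise multiplication semigroup $(T_V(t)f)(x):= e^{tV(x)}f(x)$ is a $C_0$-semigroup of contractions on $L^p(\CR^d;\CC^m)$ whose generator coincides with $V_p$ on its maximal domain. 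Hence $V_p$ is dissipative. For $A_p$, the divergence-form operator $\div(Q\nabla\cdot)$ acts componentwise, is symmetric, bounded and uniformly elliptic; the associated Dirichlet form gives a sub-Markovian symmetric semigroup on $L^2$, which extrapolates to a $C_0$-contraction semigroup on every $L^p(\CR^d;\CC^m)$ for $p\in(1,\infty)$. Subtracting the identity only strengthens dissipativity, so $A_p$ is dissipative on $W^{2,p}(\CR^d;\CC^m)$. Adding the two dissipativity inequalities on the common domain $D(A_p)\cap D(V_p)$ yields dissipativity of $L_p$, and Lumer--Phillips then delivers the strongly continuous contraction semigroup.

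The genuine work has already been done in Theorem~\ref{t.main1}, where the commutator estimate and the Monniaux--Pr\"uss theorem produce closedness and the resolvent estimate. By comparison, the step above is largely bookkeeping; the only mildly delicate points are the complexification of the dissipativity bound on $V$ and the identification of the generator of the pointwise multiplication semigroup as precisely $V_p$ with the maximal domain prescribed in Hypotheses~\ref{hyp1}.
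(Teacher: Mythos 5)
Your proposal is correct and follows the same route as the paper: extract closedness, density and the range condition from Theorem~\ref{t.main1}, verify dissipativity of $L_p$ from Estimate~\eqref{eq.diss} and the dissipativity of $A_p$, and conclude via the Lumer--Phillips theorem. The only difference is that you spell out the complexification of the form bound on $V$ and the dissipativity of each summand in detail, whereas the paper states these points as well known.
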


\begin{proof}
It is well-known that the operator $A_p$ is dissipative. Using Estimate \eqref{eq.diss}, we see that also
$L_p$ is dissipative. In view of Theorem \ref{t.main1}, the operator $L_p$ is $m$-dissipative. By the Lumer--Phillips theorem,
see \cite[II Theorem 3.15]{en00},
$L_p$ generates a contraction semigroup.
\end{proof}

\begin{rem}
If $-L_p$ is quasi-sectorial with spectral angle less than $\frac{\pi}{2}$, then the semigroup {$\{S_p(t)\}_{t\ge 0}$ (in the sequel simply denoted by $\{S(t)\}$ to ease the notation)} is analytic, as is well-known, see \cite[II Theorem 4.6]{en00}.
It is a consequence of \cite[Corollary 2]{mp97}, that in the noncommutative version of the Dore--Venni theorem the spectral angle of the sum is at most the maximum of the power angle of the summands. Thus, if the power angle of $-V_p$ is strictly less than
$\frac{\pi}{2}$, which is for example the case when $V(x)$ is symmetric with $\langle V(x)\xi, \xi\rangle \leq -|\xi|^2$
for all $x \in \CR^d$, then $\{S_p(t)\}$ is analytic.
\end{rem}

The following example shows that the semigroup $\{S_p(t)\}$ is not analytic in general.
\begin{example}
Consider the operator $\mathscr{L}$ defined on smooth functions $\zeta : \CR \to \CC^2$ by
$\mathscr{L}\zeta=\zeta''-V\zeta$, where
\begin{eqnarray*}
V(x)=
\begin{pmatrix}
0 & -x\\
x & 0
\end{pmatrix},\qquad\;\,x\in\CR.
\end{eqnarray*}
By Corollary \ref{c.generate}, the realization $L_p$ of $\mathscr{L}$, with domain $W^{2,p}(\CR;\CC^2)\cap D(V_p)$ generates a strongly continuous semigroup on $L^p(\CR;\CC^2)$ for $p\in (1,\infty)$.\\
We diagonalize the matrix $
\begin{pmatrix}
0 & -1\\
1 & 0
\end{pmatrix}
$ and so we obtain that $L_p$ is similar to the operator
\begin{eqnarray*}
\widetilde{L}_p:=P^{-1}L_pP =\begin{pmatrix}
\Delta & 0\\
0 & \Delta
\end{pmatrix}
-x\begin{pmatrix}
i & 0\\
0 & -i
\end{pmatrix},
\end{eqnarray*}
where
$P=
\begin{pmatrix}
1 & 1\\
-i & i
\end{pmatrix}.
$
Hence the semigroup generated by $L_p$ is analytic if and only if the semigroups generated by $\Delta \pm ix$ are analytic on $L^p(\CR)$.

To see that the semigroup generated by $B:=\Delta -ix$ is not analytic on $L^p(\CR)$ we introduce the transformation
\begin{eqnarray*}
\mathcal{U}_\sigma f(x)=f(x-\sigma),\qquad\;\, x\in \CR,\;\,{f\in L^p(\CR),}
\end{eqnarray*}
for arbitrary fixed $\sigma \in \CR $. So, we have
\begin{eqnarray*}
\mathcal{U}_{-\sigma}B\mathcal{U}_\sigma=B-i\sigma I.
\end{eqnarray*}
Hence,
\begin{eqnarray*}
\mathcal{U}_{-\sigma}(\mu -i\sigma-B)^{-1}\mathcal{U}_\sigma =(\mu-B)^{-1}
\end{eqnarray*}
and thus,
\begin{eqnarray*}
\|(\mu -i\sigma-B)^{-1}\|_{\cL(L^p(\CR))}=\|(\mu-B)^{-1}\|_{\cL(L^p(\CR))}
\end{eqnarray*}
 for arbitrary $\sigma \in \CR$ and every $\mu > 0$. Therefore, by \cite[Theorem II.4.6]{en00}
the semigroup generated by $B$ is not analytic.
\end{example}

Now, we collect some easy properties of the semigroup $\{S_p(t)\}$.

\begin{cor}\label{c.propertiesofS}
The following properties hold true.
\begin{enumerate}[\rm (a)]
\item The semigroup $\{S_p(t)\}$ is real, i.e.\ for $f \in L^p(\CR^d;\CR^m)$ we have
$S_p(t)f \in L^p(\CR^d; \CR^m)$.
\item The semigroups $\{S_p(t)\}$ are consistent, i.e.\ given $p,q \in (1,\infty)$ for $f\in L^p(\CR^d; \CC^m)\cap L^q(\CR^d; \CC^m)$ we have
$S_p(t)f=S_q(t)f$ for $t\geq 0$.
\end{enumerate}
\end{cor}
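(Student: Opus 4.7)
For (a), I would rely on the symmetry of $L_p$ under complex conjugation. Let $J$ denote the antilinear isometry $Jf := \overline{f}$ on $L^p(\CR^d;\CC^m)$. Since $Q$ and $V$ are real-valued, both $A_p$ and $V_p$ commute with $J$; hence $J$ leaves $D(L_p) = D(A_p) \cap D(V_p)$ invariant and $JL_p = L_p J$. Consequently $J$ commutes with $(\lambda - L_p)^{-1}$ for $\lambda > 0$, and the Yosida exponential formula
\begin{eqnarray*}
S_p(t)f = \lim_{n \to \infty} \Bigl(\frac{n}{t}\Bigr)^n \Bigl(\frac{n}{t} - L_p\Bigr)^{-n} f
\end{eqnarray*}
transfers the commutation to the semigroup. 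Realness of $f$ then forces $S_p(t)f = \overline{S_p(t)f}$.

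For (b), the plan is to approximate the (possibly unbounded) potential by bounded ones and invoke a Trotter--Kato argument. I would fix a smooth radial cutoff $\chi_n \in C_c^\infty(\CR^d)$ with $\chi_n \equiv 1$ on $B(0,n)$ and $0 \leq \chi_n \leq 1$, and set $V^{(n)} := \chi_n V$. Each $V^{(n)}$ is bounded and still satisfies $\langle V^{(n)}\xi,\xi\rangle \leq 0$, so $L_p^{(n)} := A_p + V_p^{(n)}$, with domain $W^{2,p}(\CR^d;\CC^m)$, is a bounded dissipative perturbation of $A_p$ and generates a contractive $C_0$-semigroup $\{S_p^{(n)}(t)\}$. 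Because for fixed $n$ the multiplication by the bounded $V^{(n)}$ is trivially consistent across $L^r$-spaces and because $A_p$ acts componentwise as a scalar elliptic operator with bounded coefficients (whose semigroup is classically consistent), the Dyson series expansion immediately yields $S_p^{(n)}(t) f = S_q^{(n)}(t)f$ for every $f \in L^p(\CR^d;\CC^m) \cap L^q(\CR^d;\CC^m)$.

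It then remains to show $S_p^{(n)}(t) \to S_p(t)$ strongly as $n\to\infty$. By Trotter--Kato this reduces to strong resolvent convergence $(\lambda - L_p^{(n)})^{-1} g \to (\lambda - L_p)^{-1}g$ for $g\in L^p(\CR^d;\CC^m)$ and some $\lambda > 0$. Setting $u := (\lambda - L_p)^{-1}g \in D(L_p) \subset D(V_p)$, the identity
\begin{eqnarray*}
(\lambda - L_p^{(n)})^{-1}g - u = (\lambda - L_p^{(n)})^{-1}(V^{(n)} - V)u,
\end{eqnarray*}
together with the uniform bound $\|(\lambda-L_p^{(n)})^{-1}\|\leq 1/\lambda$ from contractivity, reduces the task to showing $(V^{(n)} - V)u \to 0$ in $L^p$. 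This follows by dominated convergence, using $Vu \in L^p$ as majorant and the pointwise convergence $(V^{(n)}-V)(x)u(x) \to 0$. Passing to the limit in $S_p^{(n)}(t)f = S_q^{(n)}(t)f$ then yields the desired consistency. The main obstacle, and really the only genuinely non-routine step, is this resolvent convergence: the membership $u \in D(V_p)$ provided by $u \in D(L_p)$ is what allows one to absorb the unbounded tail of the potential in the dominated convergence argument.
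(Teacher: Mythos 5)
Your proof is correct, but it takes a genuinely different route from the paper for both parts. The paper dispatches (a) and (b) in one stroke via the Trotter product formula
\[
S_p(t)f=\lim_{n\to\infty}\big(e^{n^{-1}tA_p}e^{n^{-1}tV_p}\big)^nf,
\]
which is available since $\{e^{tA_p}\}$ and the multiplication semigroup $\{e^{tV_p}\}$ are both contractive (\cite[Corollary III.5.8]{en00}). Realness and consistency of $\{S_p(t)\}$ then follow immediately because the approximants inherit both properties from the factors: each factor is classically known to be real and consistent on the $L^p$-scale. You instead prove (a) by conjugation symmetry ($J$ commutes with $L_p$, hence with $(\lambda-L_p)^{-1}$ for $\lambda>0$, hence with the Yosida approximants), which is fine, and prove (b) by a three-layer argument: truncating $V$ to $V^{(n)}=\chi_nV$, obtaining consistency of the truncated semigroups via the Dyson expansion (valid since $V^{(n)}$ is a bounded, consistent multiplier and $e^{tA_p}$ is consistent), and passing to the limit via strong resolvent convergence plus Trotter--Kato. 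The resolvent-convergence step is sound as written: for $g\in L^p$ and $u=(\lambda-L_p)^{-1}g$ one has $u\in D(L_p)\subset D(L_p^{(n)})$, the identity $(\lambda-L_p^{(n)})^{-1}g-u=(\lambda-L_p^{(n)})^{-1}(V^{(n)}-V)u$, uniform contractivity of the resolvents, and dominated convergence via $|(V^{(n)}-V)u|\le 2|Vu|\in L^p$. What the Trotter product formula buys, and what makes the paper's proof preferable, is that it collapses your entire part (b) machinery (truncation, Dyson, Trotter--Kato) into a single application of \cite[Corollary III.5.8]{en00} and simultaneously yields part (a) without invoking the Yosida formula separately; it is also reused in the paper to prove positivity. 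Your argument, while longer, has the minor merit of not presupposing that the Trotter formula is applicable — but since contractivity of both factors is already established, nothing is actually gained here.
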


\begin{proof}
The semigroup $\{e^{tA_p}\}_{t\ge 0}$ is contractive, as is the multiplication semigroup $e^{tV_p}$ on $L^p(\CR^d;\CC^m)$.
We can thus use the Trotter product formula \cite[Corollary III.5.8]{en00} to conclude that
\begin{equation}\label{eq.tpf}
S_p(t)f = \lim_{n\to\infty} \Big(e^{n^{-1}tA_p}e^{n^{-1}tV_p}\Big)^n f
\end{equation}
for every $f \in L^p(\CR^d;\CC^m)$ and $t>0$. With the help of this formula, (a) and (b)  follow from the corresponding properties
of the semigroups generated by $A_p$ and $V_p$.
\end{proof}

We next address the case where $p=1$. We can easily extend the semigroups $\{S_p(t)\}$ to a consistent contraction semigroup
on $L^1(\CR^d; \CC^m)$. Note, however, that we no longer have knowledge of the domain of the generator.

\begin{thm}
\label{thm-L1}
There exists a strongly continuous semigroup of contractions $\{S_1(t)\}$ on $L^1(\CR^d;\CC^m)$ which is
consistent with every semigroup $\{S_p(t)\}$ for $p\in (1,\infty)$.
\end{thm}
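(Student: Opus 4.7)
The plan is to extend each $S_p(t)$ from $L^1 \cap L^p$ to $L^1$ by an $L^1$-contractivity estimate, and then verify the semigroup and strong continuity properties on all of $L^1$.

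First I would collect the $L^1$-contractivity of the two building blocks. The operator $A_p$ acts componentwise as the scalar divergence-form operator $\div(Q\nabla\cdot) - I$ with bounded, Lipschitz, uniformly elliptic $Q$; the associated heat semigroup is sub-Markovian and therefore extends consistently to an $L^1$-contractive $C_0$-semigroup. For the multiplication semigroup $\{e^{tV_p}\}$, the pointwise estimate $\langle V(x)\xi,\xi\rangle \leq -|\xi|^2$ makes $V(x)$ dissipative on $\CR^m$, so $\|e^{tV(x)}\|_{\mathrm{op}} \leq 1$ uniformly in $x$, which gives $\|e^{tV_p}f\|_1 \leq \|f\|_1$ (and the same estimate on every $L^q$, so these multiplication semigroups are mutually consistent).

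Next I would use the Trotter product formula \eqref{eq.tpf} from Corollary~\ref{c.propertiesofS}. For fixed $p\in(1,\infty)$, $f \in L^1 \cap L^p$ and $t>0$, write $T_n(t)f := (e^{n^{-1}tA_p}e^{n^{-1}tV_p})^n f$. Each of the $2n$ factors is simultaneously an $L^p$-contraction and (by consistency across the scale) an $L^1$-contraction when acting on $L^1 \cap L^p$, hence $\|T_n(t)f\|_1 \leq \|f\|_1$. Since $T_n(t)f \to S_p(t)f$ in $L^p$, a subsequence converges a.e., and Fatou's lemma yields $\|S_p(t)f\|_1 \leq \|f\|_1$. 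By density of $L^1 \cap L^p$ in $L^1$, $S_p(t)$ extends uniquely to a contraction $S_1(t)$ on $L^1(\CR^d;\CC^m)$; consistency with every $\{S_q(t)\}$ for $q\in(1,\infty)$ is built in (since any $f \in L^1 \cap L^q$ lies in $L^1 \cap L^p$ for all intermediate $p$, and the Trotter construction is $p$-independent). The semigroup identity $S_1(t+s) = S_1(t)S_1(s)$ holds on $L^1 \cap L^p$ by the semigroup property of $S_p$ and consistency, and then on $L^1$ by continuity of the extension.

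The main obstacle is the strong continuity at $t=0$, because the Trotter formula only delivers convergence in $L^p$. To overcome it, I would show that the set $Y := \{g \in L^1 : S_1(t)g \to g \text{ in } L^1 \text{ as } t\to 0^+\}$ is a closed subspace of $L^1$ containing a dense set. Closedness is the usual $3\eps$-argument using $\|S_1(t)\|\leq 1$. For density, take any $f \in C_c^\infty(\CR^d;\CC^m)$; then $f \in D(A_p)\cap D(V_p) = D(L_p)$ for every $p \in (1,\infty)$, and
\begin{equation*}
S_p(t)f - f = \int_0^t S_p(s)L_p f\, ds
\end{equation*}
as a Bochner integral in $L^p$. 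Since $L_p f = \div(Q\nabla f) - f + Vf$ has compact support and bounded entries (using $V \in W^{1,\infty}_{\mathrm{loc}}$), it lies in $L^1 \cap L^p$. Fubini applied on any ball $B_R$ and the $L^1$-contraction bound $\|S_p(s)L_p f\|_1 \leq \|L_p f\|_1$ then give
\begin{equation*}
\int_{B_R} |S_p(t)f - f|\, dx \leq \int_0^t \|S_p(s)L_p f\|_1\, ds \leq t\,\|L_p f\|_1,
\end{equation*}
and letting $R \to \infty$ yields $\|S_1(t)f - f\|_1 \leq t\,\|L_p f\|_1 \to 0$. Hence $C_c^\infty(\CR^d;\CC^m) \subset Y$, so $Y = L^1$, completing the proof of strong continuity.
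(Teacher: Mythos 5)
Your proof is correct, but it follows a genuinely different route from the paper's. For the $L^1$--contraction bound on $C_c^\infty(\CR^d;\CC^m)$, the paper uses a single interpolation inequality: by consistency and $L^p$--contractivity, $\int_{\CR^d}|S_2(t)f|^p\,dx \le \|f\|_p^p \le \|f\|_1\|f\|_\infty^{p-1}$, and then Fatou's lemma as $p\to 1^+$ gives $\|S_2(t)f\|_1\le\|f\|_1$ directly. You instead pass through the Trotter product formula \eqref{eq.tpf} (established in Corollary~\ref{c.propertiesofS}), observe that each factor $e^{n^{-1}tA_p}$ and $e^{n^{-1}tV_p}$ is an $L^1$--contraction when restricted to $L^1\cap L^p$, pick out an a.e.\ convergent subsequence of the Trotter products, and invoke Fatou. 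Both derivations of the crucial bound are valid; the paper's is shorter, yours exposes the $L^1$ structure of the two building blocks more explicitly (though the $L^1$--contractivity of $e^{tA_p}$ in the \emph{Euclidean vector} $L^1$--norm, not just componentwise, does deserve a short remark via the positive integral kernel and Minkowski's inequality). For strong continuity, the paper cites Voigt's result \cite[Proposition 4]{v92} for semigroups acting simultaneously on different $L^p$--spaces, whereas you give a self-contained proof on the dense set $C_c^\infty(\CR^d;\CC^m)$ using the integral identity $S_p(t)f - f = \int_0^t S_p(s)L_pf\,ds$ together with the already-established $L^1$--contraction of $S_p(s)$ on $L^1\cap L^p$; this is a legitimate and somewhat more elementary alternative that avoids the external reference. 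One small point you should spell out: when you apply Fubini/Tonelli and then let $R\to\infty$, you are implicitly using that $(s,x)\mapsto (S_p(s)L_pf)(x)$ is jointly measurable and that the Bochner integral can be evaluated pointwise a.e.; this is standard but worth noting since it is where the $L^p$--identity becomes an honest $L^1$--estimate.
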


\begin{proof}
Consider $f \in C_c^\infty(\CR^d;\CC^m)$. Then, $f \in L^p(\CR^d;\CC^m)$ for every $p \in (1,\infty)$ and by consistency
of the semigroups, we have $S_2(t)f = S_p(t)f \in L^p(\CR^d;\CC^m)$. Since $\{S_p(t)\}$ is a contraction, we obtain
\begin{eqnarray*}
\int_{\CR^d} |S_2(t)f|^p\, dx \leq \|f\|^p_p\leq \|f\|_1\|f\|_\infty^{p-1}.
\end{eqnarray*}
Using Fatou's lemma to let $p\to 1^+$, we infer that $S_2(t)f \in L^1(\CR^d;\CC^m)$ and $\|S_2(t)f\|_1 \leq
\|f\|_1$. Thus, by density, $S_2(t)$ can be uniquely
extended to an operator $S_1(t)$ on $L^1(\CR^d;\CC^m)$. By uniqueness of the extension, we obtain the semigroup
law for $\{S_1(t)\}$. By \cite[Proposition 4]{v92}, $\{S_1(t)\}$ is strongly continuous.

Noting that a function $f\in L^1(\CR^d;\CC^m)\cap L^2(\CR^d;\CC^m)$ can be approximated, simultaneously in $L^1$ and in
$L^2$, by a sequence of test functions, we see that $\{S_1(t)\}$ and $\{S_2(t)\}$ are consistent. Similarly, we see
that $\{S_1(t)\}$ and $\{S_p(t)\}$ are consistent for every $p\in (1,\infty)$.
\end{proof}

\begin{rem}\label{r.rescons}
From the proof of Theorem \ref{thm-L1} we do not obtain any information on the domain $D(L_1)$ of the generator
$L_1$ of the semigroup $\{S_1(t)\}$. However, as the semigroups $\{S_p(t)\}$ are consistent for $1\leq p < \infty$, so
are the resolvents $(\lambda - L_p)^{-1}$ of their generators for $\lambda >0$.
Indeed, if $f\in L^p(\CR^d;\CC^m)\cap L^q(\CR^d;\CC^m)$
for some $p,q \in [1,\infty)$, then for $g \in L^\infty(\CR^d;\CC^m)$ with compact support we have
\begin{align*}
\langle (\lambda - L_p)^{-1}f, g\rangle_{p,p'} & = \int_0^\infty e^{-\lambda t}\langle S_p(t)f, g\rangle_{p,p'} \, dt\\
&= \int_0^\infty e^{-\lambda t} \langle S_q(t)f, g\rangle_{q,q'}\, dt = \langle (\lambda-L_q)^{-1}f, g\rangle_{q,q'}.
\end{align*}
As $g$ was arbitrary, it follows that $(\lambda - L_p)^{-1}f = (\lambda - L_q)^{-1}f$.

From this it follows that if $f\in D(L_p)\cap L^q(\CR^d;\CC^m)$ ($1\le p,q<+\infty$) with $L_pf \in L^q(\CR^d;\CC^m)$,
then $f\in D(L_q)$ and $L_qf=L_pf$. In particular, we deduce that $C_c^\infty(\CR^d;\CC^m) \subset D(L_1)$.
\end{rem}

\section{Further properties of the semigroup}\label{s.further}

\subsection{Positivity}

We start by characterizing positivity of the semigroup.

\begin{prop}\label{p.positive}
The semigroup $\{S_p(t)\}$ is positive if and only if the off-diagonal entries of $V$ are nonnegative, i.e.\ $v_{kl}(x)\geq 0$ for almost every $x\in \CR^d$ whenever $k\neq l$.
\end{prop}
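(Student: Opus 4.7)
The plan is to treat both implications via the Trotter product formula \eqref{eq.tpf} together with the pointwise structure of the multiplication operator $V_p$. Recall that $e^{tV_p}$ acts as pointwise multiplication by the matrix exponential, $(e^{tV_p}f)(x) = e^{tV(x)}f(x)$, while $e^{tA_p}$ acts componentwise as the semigroup generated on $L^p(\CR^d)$ by the scalar uniformly elliptic operator $\div(Q\nabla\,\cdot\,) - I$, and is therefore positivity preserving by the parabolic maximum principle. Note also that the semigroup $\{S_p(t)\}$ is real by Corollary \ref{c.propertiesofS}(a), so it makes sense to speak of its positivity.

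For the ``if'' direction, I would start from the classical fact that a matrix $M \in \CR^{m\times m}$ has nonnegative off-diagonal entries if and only if $e^{tM}$ has nonnegative entries for every $t \geq 0$ (to see ``$\Rightarrow$'', shift $M$ by a large multiple of $I$ to obtain an entrywise nonnegative matrix, expand the exponential in power series, and undo the shift). Under our hypothesis this applies pointwise a.e.\ in $x$, so $e^{tV_p}$ is a positive operator on $L^p(\CR^d;\CC^m)$. Combined with the positivity of $e^{tA_p}$, each iterate $(e^{(t/n)A_p}e^{(t/n)V_p})^n$ maps nonnegative functions to nonnegative functions, and since the nonnegative cone is closed in $L^p$, passing to the $L^p$-limit via \eqref{eq.tpf} yields $S_p(t)f \geq 0$ for every $f \geq 0$.

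For the converse, I would argue through the generator. Fix indices $k \neq l$ and a nonnegative $g \in C_c^\infty(\CR^d)$, and set $f \coloneqq g\,e_l$, where $e_l$ denotes the $l$-th standard basis vector of $\CR^m$. Since $V$ has locally bounded entries and $f$ has compact support, $Vf \in L^p$, and hence $f \in D(A_p) \cap D(V_p) = D(L_p)$. As $f \geq 0$, positivity of $\{S_p(t)\}$ yields $(S_p(t)f)_k \geq 0$ a.e., and because $f_k \equiv 0$ this can be rewritten as $t^{-1}(S_p(t)f - f)_k \geq 0$ a.e.\ for every $t > 0$. Letting $t \to 0^+$, the difference quotient converges to $(L_p f)_k$ in $L^p$. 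A direct computation using $f_k \equiv 0$ gives $(A_p f)_k = 0$ and $(V_p f)_k = v_{kl}g$, so $(L_p f)_k = v_{kl}\,g$. Passing to an a.e.\ convergent subsequence yields $v_{kl}(x)g(x) \geq 0$ a.e., and varying $g$ over a countable family of nonnegative test functions whose supports cover $\CR^d$ yields $v_{kl} \geq 0$ a.e.

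The main technical point is in the converse: the test function must be chosen so that the difference quotient isolates exactly the off-diagonal entry $v_{kl}$, and one must check that the chosen $f$ lies in $D(L_p)$ so that the strong convergence $t^{-1}(S_p(t)f - f) \to L_p f$ is available. Both are handled by taking $f = g e_l$ with $g \in C_c^\infty$ and using that entries of $V$ are in $W^{1,\infty}_{\mathrm{loc}}$.
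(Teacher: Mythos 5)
Your proof is correct. The ``if'' direction matches the paper's approach essentially verbatim: both establish entrywise nonnegativity of $e^{tV(x)}$ for a.e.\ $x$ (you via the shift-and-power-series argument for Metzler matrices, the paper by invoking the sufficiency of the positive minimum principle for bounded generators from \cite[C-II Theorem 1.11]{aetal86}) and then propagate positivity through the Trotter product formula \eqref{eq.tpf} using the known positivity of $\{e^{tA_p}\}$. For the ``only if'' direction, however, you take a genuinely more elementary route. The paper cites the abstract positive minimum principle for generators of positive $C_0$-semigroups on Banach lattices \cite[C-II Proposition 1.7]{aetal86}, tests it against the pair $0\le \varphi e_l \in D(L_p)$ and $0 \le \varphi e_k \in L^{p'}$ with $\langle \varphi e_l, \varphi e_k\rangle_{p,p'}=0$, and reads off $\int v_{kl}\varphi^2\,dx \geq 0$. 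You instead re-derive exactly the amount of that principle you need, by noting that for $f = g e_l \in D(L_p)$ with $f_k \equiv 0$, positivity forces $t^{-1}(S_p(t)f - f)_k \geq 0$ and the $L^p$-limit $(L_pf)_k = v_{kl}g$ then inherits the sign (after passing to a pointwise a.e.\ subsequence). This buys self-containedness and avoids the Banach-lattice machinery, at the small cost of having to verify explicitly that $g e_l \in D(L_p)$ and that the limit of the difference quotient preserves the sign --- both of which you do correctly, the first via $W^{1,\infty}_{\rm loc}$ regularity of $V$ and compact support of $g$, the second via passing to an a.e.\ convergent subsequence.
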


\begin{proof}
Let us recall from \cite[C-II Proposition 1.7]{aetal86} that the generator $G$ of a positive semigroup on a Banach lattice $X$ satisfies the positive minimum principle, i.e.\ for $0\leq x \in D(G)$ and $0\leq x^* \in X^*$ with $\langle x, x^*\rangle =0$ we have $\langle Gx, x^* \rangle \geq 0$. Let us denote  the canonical basis of $\CR^m$ by $(e_k)_{1\leq k\leq m}$.
Then, for $0\leq \varphi \in C_c^\infty(\CR^d)$ the nonnegative function $\varphi e_k$ belongs to $D(A_p)$ as well as to the dual space of $L^p(\CR^d;\CR^m)$.
Thus, if we assume that $\{S_p(t)\}$ is positive, it follows that for $k\neq l$ we have
\begin{align*}
0 & \leq \langle (A_p+V_p) \varphi e_l, \varphi e_k\rangle
= \int_{\CR^d}\div (Q \nabla \varphi)\varphi \langle e_l, e_k \rangle\, dx  + \int_{\CR^d} \varphi^2 \langle Ve_l, e_k\rangle\, dx\\
& = \int_{\CR^d}v_{kl}\varphi^2\, dx.
\end{align*}
As $\varphi$ is arbitrary, this implies that $v_{kl} \geq 0$ as claimed.

To prove the converse, assume that $v_{kl}(x) \geq 0$ for $k\neq l$ and almost every $x \in \CR^d$.
This is precisely the positive minimum principle for the matrix $V(x)$. For a bounded operator, the positive minimum principle is
not only necessary, but also sufficient to generate a positive semigroup, see \cite[C-II Theorem 1.11]{aetal86}.  Using this pointwise,
we see that the multiplication semigroup $e^{tV}$ is positive. As the semigroup $\{e^{tA_p}\}$ is positive, see \cite[Corollary 4.3]{o05}, the
positivity of the semigroup $\{S_p(t)\}$ follows once again from the Trotter product formula \eqref{eq.tpf}.
\end{proof}

\subsection{Ultracontractivity}
In this subsection we will establish  ultracontractivity of the semigroup $\{S_p(t)\}$. As a consequence, the semigroup
is given by an integral matrix kernel. In view of Corollary \ref{c.propertiesofS}(a), we confine ourselves to functions with values in $\CR^m$.
Since for $1\leq p <\infty$ the semigroups $\{S_p(t)\}$ are consistent, we drop the index $p$ and merely write $\{S(t)\}$ for our semigroup.
In what follows, we denote by $\{T_p(t)\}$ the scalar semigroup
on $L^p(\CR^d)$ generated by the scalar operator $D_p \coloneqq \div (Q\nabla \cdot )$,
defined on $W^{2,p}(\CR^d)$.
Note that also these semigroups are consistent, this is why also here we drop the index $p$.

 We start by the following technical lemma which gives a pointwise domination of $\{S(t)\}$.
\begin{lem}
Let Hypotheses \ref{hyp1} hold.
Then, for $f \in C_c^\infty (\CR^d; \CR^m)$, we have
\begin{equation}\label{pointwise sg domination}
|S(t)f|^2\le T(t)|f|^2,\qquad\;\, t>0 .
\end{equation}
\end{lem}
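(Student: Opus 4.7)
The plan is to use the Trotter product formula \eqref{eq.tpf} to reduce the pointwise domination \eqref{pointwise sg domination} to analogous pointwise bounds for the two factor semigroups $\{e^{tA_p}\}_{t\ge 0}$ and $\{e^{tV_p}\}_{t\ge 0}$, and then to chain these bounds together using the positivity of the scalar semigroup $\{T(t)\}$.

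For the multiplication semigroup, $(e^{tV_p}g)(x)=e^{tV(x)}g(x)$ pointwise, and the dissipativity assumption $\la V(x)\xi,\xi\ra\le -|\xi|^2\le 0$ guarantees that each matrix exponential $e^{tV(x)}$ is a contraction on $\CR^m$, so
\begin{eqnarray*}
|e^{tV_p}g(x)|^2\le |g(x)|^2,\qquad x\in\CR^d.
\end{eqnarray*}
For the diagonal semigroup $\{e^{tA_p}\}$, the componentwise definition of $A_p$ (which includes the shift $-u_k$ from Hypotheses \ref{hyp1}(a)) gives $e^{tA_p}h=e^{-t}(T(t)h_1,\dots,T(t)h_m)$. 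Since the scalar semigroup $\{T(t)\}$ is positive and sub-Markovian (its generator $\div(Q\nabla\cdot)$ annihilates constants), Cauchy--Schwarz applied to the underlying sub-probability kernel yields $(T(t)h_k)^2\le T(t)(h_k^2)$ pointwise; summing in $k$ one obtains
\begin{eqnarray*}
|e^{tA_p}h|^2\le e^{-2t}T(t)|h|^2\le T(t)|h|^2.
\end{eqnarray*}

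To chain the two estimates, fix $n\in\CN$, set $\tau=t/n$, and define inductively $g_0=f$ and $g_{k+1}=e^{\tau A_p}e^{\tau V_p}g_k$. Using the two displayed bounds together with the order-preserving property of $T(\tau)$, one gets
\begin{eqnarray*}
|g_{k+1}|^2\le T(\tau)|e^{\tau V_p}g_k|^2\le T(\tau)|g_k|^2,
\end{eqnarray*}
and iterating via the semigroup identity $T(\tau)^n=T(t)$ yields $|g_n|^2\le T(t)|f|^2$. The Trotter formula \eqref{eq.tpf} gives $g_n\to S(t)f$ in $L^p(\CR^d;\CR^m)$, and passing to an a.e.-convergent subsequence delivers \eqref{pointwise sg domination} almost everywhere on $\CR^d$.

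The main subtlety is this last step: Trotter delivers only $L^p$-convergence, so one cannot directly take pointwise limits in the chained inequality. The remedy is the standard extraction of an a.e.-convergent subsequence, which works here because the dominating right-hand side $T(t)|f|^2$ does not depend on $n$.
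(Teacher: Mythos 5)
Your proof is correct, but it takes a genuinely different route from the paper's. The paper proves \eqref{pointwise sg domination} by a \emph{differential inequality} argument: it sets $u(t,\cdot)=S(t)f$, uses the regularity $u\in C([0,\infty);W^{2,q}(\CR^d;\CR^m))\cap C^1([0,\infty);L^q(\CR^d;\CR^m))$ for all $q\in[1,\infty)$ to compute $\tfrac12\partial_t|u|^2=\langle\partial_t u,u\rangle$, expands $\div(Q\nabla u_k)u_k$ via the product rule, discards the negative gradient term $\sum_{i,j,k}q_{ij}\partial_j u_k\partial_i u_k\geq 0$ together with the dissipative potential term $\langle Vu,u\rangle\leq -|u|^2$, concludes $\partial_t|u|^2-D_p|u|^2\leq 0$, and then compares with $w(s,\cdot)=T(t-s)|u|^2(s,\cdot)$, whose $s$-derivative is $\leq 0$ by positivity of $\{T(t)\}$; this gives $w(t,\cdot)\leq w(0,\cdot)$, which is the claim. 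You instead factor the problem through the Trotter product formula \eqref{eq.tpf}, proving the domination separately for the two factor semigroups: matrix-exponential contractivity $|e^{\tau V(x)}\xi|\leq|\xi|$ from dissipativity of $V(x)$, and the Cauchy--Schwarz/Jensen bound $(T(\tau)h_k)^2\leq T(\tau)(h_k^2)$ for the Markovian scalar semigroup, absorbing the $e^{-2\tau}$ factor. Chaining through positivity of $T(\tau)$ and extracting an a.e.-convergent subsequence from the $L^p$-convergence provided by \eqref{eq.tpf} closes the argument. Both approaches rely on the same two structural facts --- dissipativity of $V$ and positivity of $\{T(t)\}$ --- but the paper's PDE argument is more self-contained in that it only needs the generation theorem, while yours leverages the already-proven Trotter formula and is more robust to lower regularity of $S(t)f$ (you never differentiate the trajectory). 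One minor point worth making explicit: to invoke the Cauchy--Schwarz inequality for the kernel of $T(\tau)$ you should note that $T(\tau)1\leq 1$, which here holds with equality because $D_p=\div(Q\nabla\cdot)$ has no zeroth-order term.
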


\begin{proof}
Let $f\in C_c^\infty(\CR^d;\CR^m)$ be given. Let us also fix $p\in (1,\infty)$.
We set $u(t,\cdot)=S(t)f$, for  $t\ge 0$. Since
$f\in D(A_q+V_q)$ is continuously embedded into $W^{2,q}(\CR^d;\CR^m)$,
$u$ belongs to $C([0,\infty);W^{2,q}(\CR^d;\CR^m))\cap C^1([0,\infty);L^q(\CR^d;\CR^m))$ for every $q\in [1,\infty)$. It thus follows that the scalar
function
$|u|^2$ belongs to $C([0,\infty);W^{2,p}(\CR^d))$. Since $u$ solves
the system of coupled partial differential equations $\partial_t u=(\div(Q\nabla u_k)-u_k)+Vu$, we get
\begin{align*}
\frac{1}{2} \partial_t |u|^2 = \langle\partial_t u,u\rangle &= \sum_{k=1}^{m}\div(Q\nabla u_k)u_k-|u|^2+\langle Vu,u\rangle \\
&\le \sum_{k=1}^{m}\sum_{i,j=1}^{d}\partial_i(q_{ij}\partial_j u_k)u_k -2|u|^2\\
&=\sum_{k=1}^{m}\sum_{i,j=1}^{d}\partial_i(q_{ij}u_k\partial_j u_k)-\sum_{k=1}^{m}\sum_{i,j=1}^{d}q_{ij}\partial_j u_k\partial_i u_k  -2|u|^2\\
&\le \frac{1}{2}\sum_{i,j=1}^{d}\partial_i(q_{ij}\partial_j |u|^2)-2|u|^2\\
&\le \frac{1}{2}D_p|u|^2.
\end{align*}
Thus, the function $v:=\partial_t |u|^2-D_p|u|^2$ belongs to $C([0,\infty);L^p(\CR^d))$ and is nonpositive. Fix $t>0$ and set $w(s,\cdot)=T(t-s)|u|^2(s,\cdot)$ for every $s\in [0,t]$.
As  is immediately seen,
\begin{align*}
\partial_sw(s,\cdot)=&-T(t-s)D_p|u|^2(s,\cdot)+T(t-s)\partial_s|u|^2(s,\cdot)\\
=&T(t-s)(\partial_s|u|^2(s,\cdot)-D_p|u|^2(s,\cdot))\\
=&T(t-s)v(s,\cdot)\le 0,
\end{align*}
since the semigroup $\{T(t)\}$ preserves positivity (see, again, \cite[Corollary 4.3]{o05}). Hence, $w(t,\cdot)\le w(0,\cdot)$, which is \eqref{pointwise sg domination}.
 \end{proof}

We can now establish ultracontractivity of the semigroup.

\begin{prop}
Assume that Hypotheses \ref{hyp1} hold.
Then there exists $M>0$ such that
\begin{equation}\label{ultracontractivity ineq.}
\|S(t)f\|_\infty \le M t^{-\frac{d}{2}}\|f\|_1,\qquad\;\,f\in L^1(\CR^d;\CR^m).
\end{equation}
Moreover, for every $t>0$ there exists a kernel $K(t,\cdot, \cdot)\in L^\infty(\CR^d\times\CR^d;\CR^{m\times m})$ such that
\begin{equation}\label{kernel representation}
(S_p(t)f)(x)=\int_{\CR^d}K(t,x,y)f(y)dy,\qquad\;\,x\in\CR^d,\;\,f\in L^p(\CR^d;\CR^m).
\end{equation}
\end{prop}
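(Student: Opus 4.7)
The plan is to bootstrap the pointwise domination $|S(t)f|^2 \le T(t)|f|^2$ from the preceding lemma into an $L^1\to L^\infty$ bound, using the classical ultracontractivity of the scalar semigroup $\{T(t)\}$ and a duality argument.

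First I would derive an $L^2\to L^\infty$ estimate. Since $D_p=\div(Q\nabla\cdot)$ has symmetric, bounded, uniformly elliptic coefficients, $\{T(t)\}$ satisfies the classical Nash--Aronson ultracontractivity bound $\|T(t)g\|_\infty\le Ct^{-d/2}\|g\|_1$ for $g\in L^1(\CR^d)$. Applying this to $g=|f|^2$ with $f\in C_c^\infty(\CR^d;\CR^m)$ and using the lemma,
\begin{equation*}
\|S(t)f\|_\infty^2 = \||S(t)f|^2\|_\infty \le \|T(t)|f|^2\|_\infty \le Ct^{-d/2}\|f\|_2^2,
\end{equation*}
so $\|S(t)\|_{L^2\to L^\infty}\le C^{1/2}t^{-d/4}$ after density.

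Next, I would observe that the Hilbert space adjoint $\{S(t)^*\}$ on $L^2(\CR^d;\CR^m)$ is the semigroup corresponding to the differential expression $A+V^T$. Because $\langle V^T(x)\xi,\xi\rangle=\langle V(x)\xi,\xi\rangle\le -|\xi|^2$, the same dissipativity used in the proof of the previous lemma is available for $V^T$, and the identical computation yields $|S(t)^*g|^2\le T(t)|g|^2$. Therefore $\|S(t)^*\|_{L^2\to L^\infty}\le C^{1/2}t^{-d/4}$, and by duality $\|S(t)\|_{L^1\to L^2}\le C^{1/2}t^{-d/4}$. Using the semigroup law, consistency on the $L^p$ scale (Corollary \ref{c.propertiesofS}), and a density argument on $L^1\cap L^\infty$,
\begin{equation*}
\|S(2t)f\|_\infty = \|S(t)S(t)f\|_\infty \le C^{1/2}t^{-d/4}\|S(t)f\|_2 \le C\,t^{-d/2}\|f\|_1,
\end{equation*}
which gives \eqref{ultracontractivity ineq.} with $M=2^{d/2}C$ after replacing $t$ by $t/2$.

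For the kernel representation \eqref{kernel representation}, the ultracontractivity just established means that for each fixed $t>0$, $S(t)$ extends to a bounded operator $L^1(\CR^d;\CR^m)\to L^\infty(\CR^d;\CR^m)$. The canonical isometric isomorphism $\mathcal{L}(L^1(\CR^d;\CR^m),L^\infty(\CR^d;\CR^m))\cong L^\infty(\CR^d\times\CR^d;\CR^{m\times m})$, which reduces componentwise to the scalar statement and, at the scalar level, is a straightforward consequence of the duality $L^1(\CR^d\times\CR^d)^*\cong L^\infty(\CR^d\times\CR^d)$, then produces the required matrix-valued kernel $K(t,\cdot,\cdot)\in L^\infty(\CR^d\times\CR^d;\CR^{m\times m})$.

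The main obstacle I foresee is the rigorous justification of the pointwise bound $|S(t)^*g|^2\le T(t)|g|^2$: the proof of the preceding lemma leans on the regularity provided by $D(A_p+V_p)\hookrightarrow W^{2,p}$, which in turn rests on Theorem \ref{t.main1}. To transfer this to the adjoint semigroup one needs $C_c^\infty(\CR^d;\CR^m)\subset D(L_p^*)$ with analogous parabolic regularity; this can be obtained either by checking that $V^T$ also fits the framework of Hypotheses \ref{hyp1} (noting that $(D_jV^T)(-V^T)^{-\alpha}=((-V)^{-\alpha}(D_jV))^T$, so one needs the ``reversed'' version of the boundedness condition), or by reproving the pointwise domination for the adjoint directly via smooth approximations of $V^T$ and the Trotter product formula \eqref{eq.tpf}, which only uses the dissipativity inequality.
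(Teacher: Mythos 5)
Your proof is correct and matches the paper's argument step-for-step: the same $L^2$--$L^\infty$ estimate from the pointwise domination, the same dual argument via the adjoint semigroup (the paper, as you anticipate, notes exactly that the ``reversed'' boundedness of $(-V^*)^{-\alpha}D_jV^*$ holds and that the preceding proofs carry over), and the same two-step composition by the semigroup law. For the kernel representation you invoke the isometric identification of bounded operators $L^1\to L^\infty$ with $L^\infty(\CR^d\times\CR^d)$, which is precisely the Dunford--Pettis theorem the paper cites from \cite[Theorem 1.3]{ab94}; just note that this identification also uses that $L^1(\CR^d\times\CR^d)$ is the projective tensor product of two copies of $L^1(\CR^d)$, and is not merely a consequence of the duality $L^1(\CR^d\times\CR^d)^*\cong L^\infty(\CR^d\times\CR^d)$.
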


\begin{proof}
Let us first prove Estimate \eqref{ultracontractivity ineq.}. We fix $f\in C_c^\infty(\CR^d;\CR^m)$ and show that
\begin{equation}\label{L2-Loo estimate}
\|S(t)f\|_\infty\le M t^{-\frac{d}{4}}\|f\|_2,\qquad\;\,t>0.
\end{equation}
Throughout the proof $M$ is a constant, independent of $f$ and $t$, which may vary from line to line.
Using \eqref{pointwise sg domination} and  the ultracontractivity of the semigroup $\{T(t)\}$ we get
\begin{align*}
\|S(t)f\|_\infty^2\le \|T(t)|f|^2\|_\infty\le M t^{-\frac{d}{2}}\||f|^2\|_1=M t^{-\frac{d}{2}}\|f\|_2^2
\end{align*}
for $t>0$. Taking square roots, this shows \eqref{L2-Loo estimate}.
Next, we prove the $L^1$--$L^2$ estimate
\begin{equation}\label{L1-L2 estimate}
\|S(t)f\|_2\le Mt^{-\frac{d}{4}}\|f\|_1,\qquad\; t>0.
\end{equation}
To that end, note that the adjoint $V^*$ also satisfies Hypotheses \ref{hyp1}, except for the fact that instead of the
boundedness of $D_j V^* (-V^*)^{-\alpha}$, we obtain the boundedness of $(-V^*)^{-\alpha}D_j V^*$. However, an inspection
of the proofs above shows that they remain valid also under this assumption, whence we obtain the same results for the
adjoint semigroup $\{S^*(t)\}$.
In particular \eqref{pointwise sg domination} and thus also \eqref{L2-Loo estimate} hold true for  $\{S^*(t)\}$. Consequently,
\begin{align*}
\|S(t)f\|_2 &=\sup_{\|\varphi\|_{2}=1}\langle S(t)f,\varphi\rangle_{2,2}
=\sup_{\|\varphi\|_{2}=1}\langle f,S^*(t)\varphi\rangle_{2,2}\\
& \le \sup_{\|\varphi\|_{2}=1}\|S^*(t)\varphi\|_\infty \|f\|_{1}
\le Mt^{-\frac{d}{4}}\|f\|_{1}
\end{align*}
and \eqref{L1-L2 estimate} follows.
By the semigroup law and Estimates \eqref{L2-Loo estimate} and \eqref{L1-L2 estimate} we obtain
\begin{align*}
\|S(t)f\|_\infty = \|S(t/2)S(t/2)f\|_{\infty}\le Mt^{-\frac{d}{4}}\|S(t/2)f\|_2 \leq Mt^{-\frac{d}{2}}\|f\|_1
\end{align*}
for $t>0$. Using the density of $C_c^\infty(\CR^d; \CR^m)$ in $L^1(\CR^d;\CR^m)$,
we can easily complete the proof of \eqref{ultracontractivity ineq.}.\medskip

We next establish the existence of the kernel. We fix $t>0$, $f=(f_1,\dots,f_m)$ and denote the canonical basis
of $\CR^m$ by $\{e_i\}_{1\leq i\leq m}$. Then, we have
\begin{eqnarray*} S(t)f=\sum_{j=1}^{m}S(t)(f_j e_j)=\sum_{i,j=1}^{m}\langle S(t)(f_j e_j),e_i\rangle e_i. \end{eqnarray*}
For $i,j\in\{1,\dots,m\}$ and $u\in L^1(\CR^d)$, let $S_{i,j}(t)u=\langle S(t)(u e_j),e_i\rangle$.
Using \eqref{ultracontractivity ineq.}, we obtain
\begin{align*}
\|S_{i,j}(t)u\|_\infty=  \|\langle S(t)(u e_j),e_i\rangle\|_\infty\leq\|S(t)(u e_j)\|_\infty\leq
 Mt^{-\frac{d}{2}}\|ue_j\|_1=Mt^{-\frac{d}{2}}\|u\|_1.
\end{align*}
Thus, $S_{i,j}(t)$ maps $L^1(\CR^d)$ into $L^\infty(\CR^d)$. By the Dunford--Pettis theorem, see \cite[Theorem 1.3]{ab94}, there
exists a kernel $k_{i,j}(t,\cdot, \cdot )\in L^\infty(\CR^d\times\CR^d)$ such that
\begin{eqnarray*} (S_{i,j}(t)u)(x)=\int_{\CR^d}k_{ij}(t,x,y)u(y)dy, \end{eqnarray*}
for all $x\in \CR^d$. Setting $K(t, \cdot, \cdot) = (k_{ij}(t,\cdot, \cdot))_{i,j=1}^m$, we conclude that
 \begin{eqnarray*}
 S(t)f=\int_{\CR^d}\sum_{i,j=1}^{m}k_{ij}(t,x,y)f_j(y)e_i dy=\int_{\CR^d}K(t,x,y) f(y)dy,
 \end{eqnarray*}
 for all $f\in L^p(\CR^d;\CR^m)$.
\end{proof}

\subsection{Spectrum of the generator}

Last, we analyze the spectrum of the operator $L_p$ and prove the following result.

\begin{thm}\label{t.compactness}
Assume in addition to Hypotheses \ref{hyp1} that there exists a function $\kappa: \CR^d \to [0,\infty) $ with $\lim_{|x|\to \infty}
\kappa (x) = \infty$ such that $|V(x)\xi| \geq \kappa (x)|\xi|$ for all $x\in\CR^d$ and $\xi \in \CR^m$. Then, for all $p\in (1,\infty)$, the operator $L_p$ has compact resolvent.
Consequently, its spectrum is independent of $p\in (1,\infty)$ and consists of eigenvalues only.
\end{thm}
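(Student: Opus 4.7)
The plan is to prove compactness of the resolvent $(\lambda-L_p)^{-1}$ by establishing a compact embedding of $D(L_p)$ into $L^p(\CR^d;\CC^m)$; the remaining conclusions about the spectrum then follow from standard facts for closed operators with compact resolvent together with the consistency relations of Remark \ref{r.rescons}.

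\textbf{Step 1 (Separate domination).} Since $L_p=A_p+V_p$ is closed on $D(L_p)=D(A_p)\cap D(V_p)$ by Theorem \ref{t.main1}, equipping $D(L_p)$ with the graph norm yields a Banach space in which both $A_p$ and $V_p$ restrict to closed operators into $L^p(\CR^d;\CC^m)$, hence are bounded by the closed graph theorem. Consequently there exists $C>0$ with
\[
\|A_pf\|_p+\|V_pf\|_p\le C\bigl(\|f\|_p+\|L_pf\|_p\bigr),\qquad f\in D(L_p).
\]

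\textbf{Step 2 (Compact embedding).} Elliptic regularity for $A_p$ turns the $A_p$-bound into a uniform $W^{2,p}$-bound, while the pointwise inequality $|V(x)\xi|\ge \kappa(x)|\xi|$ gives
\[
\int_{\CR^d}\kappa(x)^p|f(x)|^p\,dx\le \|V_pf\|_p^p.
\]
For a graph-norm bounded sequence $(f_n)\subset D(L_p)$, the Rellich--Kondrachov theorem produces a subsequence (not relabelled) converging in $L^p(B_R;\CC^m)$ for every ball $B_R$. Given $\varepsilon>0$, choose $R$ so that $\kappa(x)\ge \varepsilon^{-1}$ for $|x|\ge R$; then
\[
\int_{|x|\ge R}|f_n(x)|^p\,dx\le \varepsilon^p\int_{|x|\ge R}\kappa(x)^p|f_n(x)|^p\,dx
\]
is uniformly $O(\varepsilon^p)$. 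Combining local convergence with uniformly small tails yields convergence in $L^p(\CR^d;\CC^m)$. Hence the embedding $D(L_p)\hookrightarrow L^p(\CR^d;\CC^m)$ is compact, and since $(\lambda-L_p)^{-1}$ maps $L^p(\CR^d;\CC^m)$ continuously into $(D(L_p),\|\cdot\|_{L_p})$ for any $\lambda\in\rho(L_p)$, it is a compact operator.

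\textbf{Step 3 (Consequences).} For a closed operator with compact resolvent, $\sigma(L_p)$ is at most countable, has no finite accumulation point, and consists entirely of eigenvalues of finite algebraic multiplicity. For the $p$-independence I would combine the consistency of resolvents (Remark \ref{r.rescons}) with the kernel representation from the previous subsection: any eigenfunction $L_pf=\lambda f$ satisfies $f=e^{-\lambda t}S(t)f$ for every $t>0$, and ultracontractivity together with the semigroup law gives $f\in L^\infty(\CR^d;\CC^m)$, while the weighted bound $\int \kappa^p|f|^p<\infty$ combined with the unboundedness of $\kappa$ at infinity forces enough decay to place $f$ in every $L^q(\CR^d;\CC^m)$, $q\in(1,\infty)$. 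By the consistency of the domains (Remark \ref{r.rescons}) one then has $f\in D(L_q)$ with $L_qf=\lambda f$, so $\sigma(L_p)\subseteq \sigma(L_q)$, and equality follows by symmetry.

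The principal obstacle is the compact embedding of Step 2, which crucially uses the new hypothesis $\kappa(x)\to\infty$ to make tails uniformly small; the separate control of $\|A_pf\|_p$ and $\|V_pf\|_p$ in Step 1, based on the closedness of $L_p$ from Theorem \ref{t.main1}, is equally essential, since the graph norm of $L_p$ by itself only controls the sum $A_pf+V_pf$.
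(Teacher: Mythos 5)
Your Steps 1 and 2 follow essentially the same route as the paper. The paper phrases the separate control of $A_p$ and $V_p$ as an equivalence of the graph norm of $L_p$ with $\|u\|_{2,p}+\|V_pu\|_p$ via the open mapping theorem, whereas you obtain the same conclusion from the closed graph theorem; these are interchangeable. The compactness argument (Rellich--Kondrachov on a fixed ball plus uniformly small tails from $\kappa\to\infty$) coincides with the paper's proof.

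Step 3 deviates and contains a genuine gap. For the $p$-independence of the spectrum the paper simply appeals to the consistency of the compact resolvents together with \cite[Corollary 1.6.2]{d89}, which is exactly the result asserting that consistent compact resolvents on an $L^p$-scale share the same spectrum. You instead attempt a direct argument that every $L^p$-eigenfunction already lies in \emph{every} $L^q(\CR^d;\CC^m)$ for $q\in(1,\infty)$, and the crucial step is the claim that ``the weighted bound $\int\kappa^p|f|^p<\infty$ combined with the unboundedness of $\kappa$ at infinity forces enough decay to place $f$ in every $L^q$.'' This does not follow from $\kappa\to\infty$ alone. The conclusion $f\in L^q$ for $q<p$ would require, via H\"older, that $\kappa^{-pq/(p-q)}$ be integrable near infinity, i.e.\ a quantitative lower bound on the growth of $\kappa$. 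But $\kappa\to\infty$ can be arbitrarily slow. For instance with $d=1$, $\kappa(x)=\log\log(10+|x|)$ and $|f(x)|=(1+|x|)^{-1/p}\bigl(\log(10+|x|)\bigr)^{-2/p}$, one has $f\in L^p\cap L^\infty$ and $\int\kappa^p|f|^p<\infty$, yet $f\notin L^q$ for any $q<p$. (Ultracontractivity also does not help here: the kernel bound yields $S(t):L^p\to L^q$ only for $q\ge p$.) Replacing this argument by the citation of Davies' Corollary 1.6.2, as the paper does, closes the gap.
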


\begin{proof}
Fix a $p\in (1,\infty)$. To show that $L_p$ has compact resolvent, it suffices to prove that $D(L_p)$ is compactly embedded into $L^p(\CR^d;\CC^m)$.
Note that, as a consequence of Theorem \ref{t.main1}, the graph norm of $L_p$ is equivalent to the norm $u\mapsto \tnorm{u} \coloneqq  \|u\|_{2.p}+\|V_pu\|_p$. Indeed, for $u \in D(L_p)$ we clearly have $\|\cdot\|_{D(A)}\le C\tnorm{u}$ for some positive constant $C$, independent of $u$. As $D(L_p)$ is complete with respect to both norms,
the equivalence of the two norms follows from the open mapping theorem. In view of this remark, in the rest of
the proof we assume that $D(A_p+V_p)$ is endowed with this norm.

By our additional assumption, we have
\begin{equation}
\|V_pu\|_p^p \geq \int_{\CR^d}\kappa (x)^p|u(x)|^p\, dx
\label{freccia}
\end{equation}
for every $u\in D(A_p+V_p)$.
Using this estimate it is easy to check that the closed unit ball of $D(A_p+V_p)$ is compact (or, equivalently, totally bounded) in $L^p(\CR^d;\CR^m)$. To see this, let $u$ belong to the unit ball of $D(A_p + V_p)$ so that in particular $\|V_pu\|_p\leq 1$.
Given $\varepsilon>0$ we fix $R>0$ sufficiently large so that $\kappa\ge \varepsilon^{-1}$ outside the ball $B_R \coloneqq\{ x\in \CR^d : |x| < R\}$. Then, from Equation \eqref{freccia}, we deduce that
\begin{align*}
\int_{\CR^d\setminus B_R}|u(x)|^p\, dx\le &\varepsilon^p\int_{\CR^d\setminus B_R}\kappa(x)^p|u(x)|^p\,dx\\
\le &\varepsilon^p\int_{\CR^d}\kappa(x)^p|u(x)|^p\,dx
\le \varepsilon^p\|V_pu\|_p^p\le \varepsilon^p.
\end{align*}
Since the set of the restriction to $B_R$ of functions in $D(A_p+V_p)$ is continuously embedded in
$W^{2,p}(B_R;\CC^m)$, which is compactly embedded into $L^p(B_R; \CC^m)$, we find finitely many functions $g_1,\ldots,g_k\in L^p(B_R;\CC^m)$ such that, for
every $u$ in the unit ball of $D(A_p+V_p)$, there
exists an index $j\in \{1,\ldots, k\}$ such that
\begin{eqnarray*}
\int_{B_R}|u(x)-g_j(x)|^p\,dx\le\varepsilon^p.
\end{eqnarray*}
Denoting the trivial extension of $g_j$ to $\CR^d$ by $\bar g_j$, we have
\begin{align*}
\int_{\CR^d}|u(x)-\bar g_j(x)|^p\,dx=\int_{B_R}|f(x)-g_j(x)|^p\,dx
+\int_{\CR^d\setminus B_R}|u(x)|^p\,dx\le 2\varepsilon^p.
\end{align*}
This shows that the unit ball of $D(A_p+V_p)$ is covered by  the balls in $L^p(\CR^d;\CC^m)$ centered at $\overline g_j$ of radius $2^{\frac{1}{p}}\varepsilon$. As $\eps >0$ was arbitrary, it follows that the unit ball of $D(A_p+V_p)$ is totally bounded
in $L^p(\CR^d;\CC^m)$.

The fact that the spectrum consists only of eigenvalues follows from spectral properties of compact operators with help of the spectral mapping theorem for the resolvent, cf. \cite[Theorem IV.1.13]{en00}.

Since the resolvent operators $(\lambda-L_p)^{-1}$ are consistent (see Remark \ref{r.rescons}) and compact, the $p$-independence of the spectrum
follows from \cite[Corollary 1.6.2]{d89}.
\end{proof}

Using the Cauchy--Schwarz inequality, we see that  the additional assumption in Theorem \ref{t.compactness} is, in particular,
satisfied if we have $\la V(x)\xi, \xi\ra\le -\tilde\kappa (x)|\xi|^2$ for a certain function $\tilde \kappa    : \CR^d \to [0,\infty)$ (with $\lim_{|x|\to\infty}\tilde\kappa (x) = \infty$)
for all $\xi \in\CR^m$. If $V(x)$ is symmetric for every $x\in\CR^d$, then the two conditions are equivalent. Indeed, the assumption in Theorem \ref{t.compactness} and Hypotheses \ref{hyp1}(b) imply that every eigenvalue of $V(x)$ {does not exceed $-\kappa(x)$. This, in turn, is equivalent to the condition
$\la V(x)\xi, \xi\ra \le-\kappa (x)|\xi|^2$, for all $\xi\in\CR^m$.

However, the assumption in Theorem \ref{t.compactness} is more general than this, since it is for example satisfied for the potential in Example \ref{ex.poly}.
Indeed, in this case we have $|V(x)\xi| = {(1+|x|^r)}|\xi|$, so that we can choose $\kappa (x)={1+|x|^r}$ for all $x\in\CR^d$. On the other hand,
we have $\la V(x)\xi, \xi\ra = 0$ for all $x\in \CR$ and $\xi \in \CR^2$.

Finally, we note that compactness may fail even if all entries in the potential $V$ are unbounded near $\infty$.

\begin{example}
Consider the potential
\begin{eqnarray*}
V(x) =
\begin{pmatrix}
-|x| & |x|\\
|x| & -|x|
\end{pmatrix},\qquad\;\,x\in\CR^d.
\end{eqnarray*}
Note that $\la V(x)\xi, \xi\ra = - |x|(\xi_1+\xi_2)^2 \leq 0$ for all $x\in \CR^d$ and $\xi\in\CR^2$, so that the quadratic form is semibounded.
Moreover, as the entries of $V$ are Lipschitz continuous, $\nabla V$ is uniformly bounded. Thus, choosing $Q= I$,  Hypotheses \ref{hyp1} are satisfied with $\alpha =0$.
 However, $0$ is an eigenvalue of $V(x)$ with eigenvector $(1,1)^\mathsf{T}$  for every $x\in \CR^d$. Thus, if we pick $f\in W^{2,p}(\CR^d;\CC^m)$, we see that $u=(f,f) \in D(A_p+V_p)$ and $(A_p+V_p)u = (\Delta_p f, \Delta_p f)$,
 where $\Delta_p$ is the scalar Laplace operator on $L^p(\CR^d)$.
It follows that $(\lambda-A_p-V_p)^{-1}u=((\lambda-\Delta_p)^{-1}f,(\lambda-\Delta_p)^{-1}f)$ for $\lambda\in\rho(A_p+V_p)\cap\rho(\Delta_p)$, $u=(f,f)\in L^p(\CR^d;\CC^2)$. As a consequence, we obtain
 \begin{eqnarray*}
 S_p(t)u
 =
 \begin{pmatrix}
 e^{t\Delta_p} f\\
 e^{t\Delta_p}f
 \end{pmatrix},\qquad\;\,t>0,\;\,f\in L^p(\CR^d),
 \end{eqnarray*}
 where $\{e^{t\Delta_p}\}$ is the (scalar valued) Gaussian semigroup.
 As the Gaussian semigroup is not compact, the semigroup $\{S_p(t)\}$ cannot be compact either.
\end{example}

\end{document}